\documentclass[a4paper,12pt]{amsart}
\usepackage{amssymb}
\usepackage{amsmath}
\usepackage{color}


\addtolength{\headheight}{1.14998pt}

\addtolength{\oddsidemargin}{-.5cm}
\addtolength{\evensidemargin}{-1.5cm}
\addtolength{\textwidth}{2cm}

\newcommand{\hide}[1]{}
\newcommand{\ignore}[1]{}

\newcommand{\Z}{\mathbb{Z}}
\newcommand{\Q}{\mathbb{Q}}

\newcommand{\F}{\mathbb{F}}

\newcommand{\leg}[2]{\left(\frac{#1}{#2}\right)}

\newtheorem{dummy}{Dummy}


\newtheorem{lemma}[dummy]{Lemma}
\newtheorem{theorem}[dummy]{Theorem}
\newtheorem*{theorem*}{Theorem}

\newtheorem{cor}[dummy]{Corollary}

\theoremstyle{definition}

\theoremstyle{remark}

\newtheorem*{rem*}{Remark}

\begin{document}
\bibliographystyle{amsalpha}

\author{S.~Mattarei}
\email{smattarei@lincoln.ac.uk}
\address{Charlotte Scott Research Centre for Algebra\\
University of Lincoln \\
Brayford Pool
Lincoln, LN6 7TS\\
United Kingdom}

\author{R.~Tauraso}
\email{tauraso@mat.uniroma2.it}
\urladdr{https://www.mat.uniroma2.it/\~{ }tauraso/}
\address{Dipartimento di Matematica\\
  Universit\`a di Roma ``Tor Vergata'' \\
  via della Ricerca Scientifica\\
  00133 Roma\\
  Italy}

\title[Congruences for partial sums of the generating series for $\binom{3k}{k}$]{Congruences for partial sums\\
of the generating series for $\binom{3k}{k}$}

\begin{abstract}
We produce congruences modulo a prime $p>3$ for sums
$\sum_k\binom{3k}{k}x^k$
over ranges $0\le k<q$ and $0\le k<q/3$, where $q$ is a power of $p$.
Here $x$ equals either $c^2/(1-c)^3$, or $4s^2/\bigl(27(s^2-1)\bigr)$,
where $c$ and $s$ are indeterminates.
In the former case we deal more generally with shifted binomial coefficients $\binom{3k+e}{k}$.
Our method derives such congruences directly from closed forms for the corresponding series.
\end{abstract}

\keywords{Congruences, generating functions, binomial coefficients}
\subjclass[2000]{Primary 05A16; secondary 05A10}

\maketitle

\thispagestyle{empty}

\section{Introduction}

There is a growing literature on
congruences modulo a prime (or sometimes modulo a power of a prime)
for sums involving binomial coefficients.
In several cases such sums are truncated versions of power series
for which a closed form is known.
Similarities of the finite congruences with those closed forms
are often highlighted without making explicit connections.
In~\cite{MatTau:truncation} the authors initiated a systematic
derivation of congruences directly
from closed forms for the corresponding series,
focussing on various sums involving {\em central binomial coefficients}
$\binom{2k}{k}$, or the related {\em Catalan numbers} $C_k=(k+1)^{-1}\binom{2k}{k}$.
In that case the paradigm was the congruence
$
\sum_{0\le k<q}\binom{2k}{k}x^k\equiv(1-4x)^{(q-1)/2}\pmod{p},
$
which is not hard to prove directly but may conveniently be deduced from the well-known identity
$
\sum_{k=0}^\infty\binom{2k}{k}x^k
=
(1-4x)^{-1/2}
$
via a procedure that one may call {\em truncation and reduction modulo $p$.}
A range of variations was systematically investigated, and substitution
of rational, or more generally algebraic numbers, for $x$ yielded various interesting numerical congruences, such as
$
\sum_{0\le k<p}\binom{2k}{k}k^{-3}\equiv 2B_{p-3}/3\pmod{p},
$
where $p>3$ is a prime and $B_{p-3}$ is a Bernoulli number.

In this paper we investigate certain sums
involving binomial coefficients of the form $\binom{3k}{k}$.
More generally, one may consider
the power series $y=\sum_{k=0}^\infty\binom{rk}{k}x^k$.
Because that series satisfies
$(y-1)\bigl((r-1)y+1\bigr)^{r-1}-r^rxy^r=0$,
an equation of degree $r$ in $y$
(see Equation~\eqref{eq:y_equation} below),
the existence of a closed form for the series depends on being able to `solve' that equation.
When $r=3$, Cardano's formula yields a
closed form for $y$,
to which one may then apply the machinery of
truncation and reduction modulo $p$ and obtain corresponding congruences
for the truncated sums.
We carry out that in Section~\ref{sec:Cardano}, in terms of an accessory indeterminate $s$
in place of $x$, where $x=4s^2/\bigl(27(s^2-1)\bigr)$.
That substitution has the simplifying effect of turning the discriminant
of the cubic equation into a perfect square.
By evaluating the resulting congruence at rational values of the indeterminate,
or even irrational but $p$-integral algebraic values, we discover interesting numerical congruences such as
$
\sum_{0\leq k<q/3}\binom{3k}{k}
3^{-k}
\equiv
\varepsilon F_{2(2q+\varepsilon)/3}
\pmod{p}
$
and
$
\sum_{q/2<k<2q/3}\binom{3k}{k}
3^{-k}
\equiv
\varepsilon F_{2(q-\varepsilon)/3}
\pmod{p},
$
in terms of Fibonacci numbers,
where $p>3$ and $\varepsilon=\leg{q}{3}$ denotes a Legendre symbol.
We provide a wider sample of such numerical congruences in Section~\ref{sec:numerical}.

An alternate approach to solving the above-mentioned equation of degree $r$
for the series $y$ is the possibility of parametrizing one special solution
of the equation, different from the one we are interested in, thus allowing
the left-hand side of the equation to factorize, with our series $y$ being
a root of the remaining factor of degree $r-1$.
The details of this procedure are explained in Section~\ref{sec:series},
and are carried out in terms of the more general series
$\sum_{k=0}^\infty\binom{rk+e}{k}x^k$, where $e$ is a nonnegative integer.
Note that treating shifted versions $\binom{rk+e}{k}$ is
more general than restricting to shifts of the form $\binom{rk}{k-d}$
as done in some papers, because the latter can be written as
$\binom{rh+rd}{h}$ with $h=k-d$.

When $r=3$ this allows the series, once written in terms of an accessory indeterminate
$c$, where $x=c^2/(1-c)^3$, to have a closed form involving only one square root
extraction, which is Equation~\eqref{eq:3k+e_series} below.
A further accessory indeterminate $\beta$, related to $c$ by $c=\beta(1-\beta)$,
allows one to avoid explicit square root extraction and express the closed form
as a rational function of $\beta$.
This device, which was already employed in~\cite{MatTau:truncation},
facilitates the subsequent truncation process.
Our main result here is Theorem~\ref{thm:3k+e_su_k},
in Section~\ref{sec:congruences}, which states congruences
for certain finite sums
$\sum\binom{3k+e}{k}x^k$
in terms of rational functions of $\beta$.
The natural finite summation range $0\le k<q$
for those sums decomposes further into to three natural subintervals
according to Lucas' theorem.
The proof of Theorem~\ref{thm:3k+e_su_k} is the longest in this paper and occupies Section~\ref{section:proof_3k+e_su_k}.

In Section~\ref{sec:exploiting} we present some applications of Corollary~\ref{cor:3k_su_k},
which is the special case $e=0$ of Theorem~\ref{thm:3k+e_su_k}, and as such has a simpler formulation.
In particular, Theorem~\ref{thm:zero} characterizes the values of $a\in\F_q$,
the field of $q$ elements, such that
$\sum_{0<k<q/3}\binom{3k}{k}a^k\equiv 0\pmod{p}$.

\section{The power series $\sum_{k=0}^{\infty}\binom{rk+e}{k}x^k$}\label{sec:series}

In this section we collect some information on the generating function of the binomial coefficients $\binom{rk+e}{k}$ as a function of $k$.
For $r$ a positive integer, the power series
\begin{equation}\label{eq:B_r_def}
\mathcal{B}_r(x)
=\sum_{k=0}^{\infty}\frac{1}{rk+1}\binom{rk+1}{k}x^k
=\sum_{k=0}^{\infty}\frac{1}{(r-1)k+1}\binom{rk}{k}x^k
\end{equation}
was called the {\em generalized binomial series} in~\cite[Equation~(5.58)]{GKP}.
Note that $\mathcal{B}_1(x)=1/(1-x)$.
According to~\cite[Example~6.2.6]{Stanley:EC2},
$\mathcal{B}_r(x)$ satisfies
\begin{equation}\label{eq:B_r_equation}
\mathcal{B}_r(x)=1+x\mathcal{B}_r(x)^r,
\end{equation}
which can be proved using Lagrange inversion.
More generally, for $e>0$ Lagrange inversion
produces
\begin{equation}\label{eq:GKP(5.60)}
\mathcal{B}_r(x)^e
=\sum_{k=0}^{\infty}\frac{e}{rk+e}\binom{rk+e}{k}x^k,
\end{equation}
which is~\cite[Equation~(5.60)]{GKP}.
One may  also obtain Equation~\eqref{eq:GKP(5.60)}
inductively from Equation~\eqref{eq:B_r_def}
using the Rothe-Hagen convolution identity~\cite[Equation~(5.63)]{GKP}.
The series in Equation~\eqref{eq:GKP(5.60)} is the ordinary generating function of the Fuss-Catalan numbers, a generalization of the Catalan numbers introduced by Nicolaus Fuss in the late eighteenth century.
Differentiating Equations~\eqref{eq:B_r_equation} and~\eqref{eq:GKP(5.60)},
and then eliminating the derivative of $\mathcal{B}_r(x)$, one finds
\begin{equation}\label{eq:GKP(5.61)}
\frac{\mathcal{B}_r(x)^e}{1-r+r\mathcal{B}_r(x)^{-1}}
=\sum_{k=0}^{\infty}\binom{rk+e}{k}x^k,
\end{equation}
which is~\cite[Equation~(5.61)]{GKP}.
Although this derivation is only valid for $e>0$,
Equation~\eqref{eq:GKP(5.61)} holds for $e=0$ as well,
as one can see by differentiating the second expression for $\mathcal{B}_r(x)$ given in
Equation~\eqref{eq:B_r_def} instead of Equation~\eqref{eq:GKP(5.60)}.

Equation~\eqref{eq:GKP(5.61)} shows that each formal power series $y_{r,e}(x)=\sum_{k=0}^{\infty}\binom{rk+e}{k}x^k\in\Q[[x]]$ is algebraic,
because so is $\mathcal{B}_r(x)$ according to Equation~\eqref{eq:B_r_def}.
This means that $y_{r,e}(x)$ belongs to a finite-degree extension field of the field $\Q((x))$ of formal Laurent series.
In fact, $\mathcal{B}_r(x)$ is algebraic of degree $r$, with minimal polynomial $z^r-zx^{-1}+x^{-1}$ obtained from Equation~\eqref{eq:B_r_def}.
(That is indeed the minimal polynomial because it is irreducible over $\Q((x))$.)
Since $y_{r,e}(x)$ belongs to the extension field of $\Q((x))$ of $\Q((x))$ generated by $\mathcal{B}_r(x)$, is also algebraic, of degree not exceeding $r$.
It is not hard to show that $y_{r,e}(x)$ has degree precisely $r$.
Consequently, $y_{r,e}(x)$ satisfies an equation of degree $r$ analogous to Equation~\eqref{eq:B_r_def}.
Such an equation is awkward when worked out in general, and we will have no need for that in this paper, except for the special case $e=0$,
which is easy to deduce from Equation~\eqref{eq:GKP(5.61)} and Equation~\eqref{eq:B_r_equation}:
the power series $y=y_{r,0}(x)=\sum_{k=0}^\infty\binom{rk}{k}x^k$ satisfies
\begin{equation}\label{eq:y_equation}
(y-1)\bigl((r-1)y+1\bigr)^{r-1}-r^rxy^r=0.
\end{equation}
This equation can also be found in~\cite[Example~6.2.7 ]{Stanley:EC2}.

In principle, a closed form for the series $y_{r,e}(x)$ in terms of radicals and rational expressions may be obtained for $r\le 4$
by solving the corresponding equation of degree $r$ using radicals.
This is straightforward for $r=2$ and leads to familiar closed forms.
For $r=3$ one may use Cardano's formula, but that is more easily done through an artifice which renders the discriminant
(almost) a perfect square, and we devote Section~\ref{sec:Cardano} to that approach in the special case $e=0$.

Here we discuss a different artifice, which allows one to pass from degree $r$ to one less in the general case.
In order to characterize $\mathcal{B}_r(x)$ among the roots of Equation~\eqref{eq:B_r_equation}, it is more convenient to work with
its reciprocal.
The power series
$w=w(x)=1/\mathcal{B}_r(x)$
is the only solution of the equation
$w^r-w^{r-1}+x=0$
such that $w(0)=1$.
If we set $x=-c^{r-1}/(c-1)^r$,
then the resulting equation has $w=c/(c-1)$
among its roots, and its left-hand side factorizes as
\begin{align*}
w^r-w^{r-1}-\frac{c^{r-1}}{(c-1)^r}
=
\left(w-\frac{c}{c-1}\right)
\left(w^{r-1}+\sum_{i=0}^{r-2}\frac{c^i}{(c-1)^{i+1}}w^{r-2-i}\right).
\end{align*}
Consequently, the series
$w=1/\mathcal{B}_r\bigl(-c^{r-1}/(c-1)^r\bigr)$
is the only solution of the equation
\begin{equation}\label{eq:w_equation}
w^{r-1}+\sum_{i=0}^{r-2}\frac{c^i}{(c-1)^{i+1}}w^{r-2-i}=0
\end{equation}
satisfying $w(0)=1$.
Our gain in passing from the indeterminate $x$ to $c$ lies in this equation having degree one less than the original equation $w^r-w^{r-1}+x=0$.

In particular, when $r=2$ Equation~\eqref{eq:w_equation} reads $w+1/(c-1)$, and hence
$\mathcal{B}_2\bigl(-c/(c-1)^2\bigr)=1-c$.
Equation~\eqref{eq:GKP(5.61)} then gives us
\[
\sum_{k=0}^{\infty}\binom{2k+e}{k}\left(\frac{-c}{(c-1)^2}\right)^k=\frac{(1-c)^{e+1}}{1+c}.
\]
Here $c$ can easily be obtained from $x$, as
$c=1-(1-\sqrt{1-4x})/(2x)$,
which leads to the better-known equation
\[
\sum_{k=0}^{\infty}\binom{2k+e}{k}x^k=
\frac{1}{\sqrt{1-4x}}
\left(\frac{1-\sqrt{1-4x}}{2x}\right)^e,
\]
see~\cite[Equation~(2.47)]{Wilf}.

When $r=3$ we find that
$w=1/\mathcal{B}_3\bigl(-c^{2}/(c-1)^3\bigr)$
is the only solution of the equation
\[
w^{2}+\frac{1}{c-1}w+\frac{c}{(c-1)^2}=0
\]
such that $w(0)=1$.
Hence one obtains
\[
\mathcal{B}_3\left(c^{2}/(1-c)^3\right)=(1-c)\frac{1-\sqrt{1-4c}}{2c}.
\]
It is now convenient to set
$\beta=(1-\sqrt{1-4c})/2$.
Noting that $\beta(1-\beta)=c$ we find
\[
\mathcal{B}_3\left(c^{2}/(1-c)^3\right)=\frac{1-\beta+\beta^2}{1-\beta}
=\frac{1+\beta^3}{1-\beta^2}.
\]
Equation~\eqref{eq:GKP(5.61)} then gives us
\begin{equation}\label{eq:3k+e_series}
\sum_{k=0}^{\infty}\binom{3k+e}{k}\left(\frac{c^2}{(1-c)^3}\right)^k
=
\frac{1}{(1+\beta)(1-2\beta)}
\frac{(1-\beta+\beta^2)^{e+1}}{(1-\beta)^e}.
\end{equation}
In the next sections we will derive from this equation a congruence modulo a prime $p$ for certain finite sums,
obtained by truncating the series at appropriate places.
For comparison, with the same notation we have
\[
\sum_{k=0}^{\infty}\binom{2k+e}{k}c^k
=
\frac{1}{(1-2\beta)(1-\beta)^e},
\]
which was used as a starting point for deducing congruences in the proof of~\cite[Theorem~5]{MatTau:truncation}.

\section{Congruences for finite sums $\sum_{k}\binom{rk+e}{k}x^k$ modulo a prime}\label{sec:congruences}

Our first goal in this paper is an evaluation, in closed form and as polynomial congruences modulo a prime,
of finite sums $\sum_{k}\binom{3k+e}{k}x^k$ over certain ranges.
We start with describing certain natural ranges for evaluations modulo a prime coming from Lucas' theorem, for the more general
sums $\sum_{k}\binom{rk+e}{k}x^k$,
which are refinements of the basic natural range $0\le k<q$, where $q$ is a power of $p$.

\begin{lemma}\label{lemma:binomial_vanish}
Let $r$ be a positive integer, let $q$ be a power
of a prime $p$, and let $0\le e<q$.
Then the binomial coefficient $\binom{rk+e}{k}$ for $0\le k<q$
is a multiple of $p$ unless
$k\in A(r,m,e)$
for some $0<m\le r$, where
\[
A(r,m,e)=\left\{k\in\Z:
\frac{(m-1)q-e}{r-1}
\le k<
\frac{mq-e}{r}
\right\}.
\]
\end{lemma}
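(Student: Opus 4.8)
The plan is to invoke Lucas' theorem (equivalently, Kummer's theorem on carries) to convert the non-divisibility of $\binom{rk+e}{k}$ by $p$ into a digit condition, and then to read off the two endpoints of $A(r,m,e)$ from that condition together with the Euclidean division of $rk+e$ by $q$. Note that the lemma only asserts a \emph{necessary} condition for $p\nmid\binom{rk+e}{k}$ (it says the coefficient \emph{is} a multiple of $p$ \emph{unless} $k$ lies in some $A(r,m,e)$), so I need only one implication, which keeps the argument light.

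First I would recall that $p\nmid\binom{n}{k}$ forces every base-$p$ digit of $k$ to be at most the corresponding digit of $n$. Writing $q=p^t$ and using $0\le k<q$, the nonzero digits of $k$ occupy only the bottom $t$ positions, so this digitwise domination, restricted to those positions, compares $k$ with the number $n_0$ formed by the bottom $t$ base-$p$ digits of $n=rk+e$. Since digitwise domination implies numerical inequality, I obtain the necessary condition $k\le n_0$, where $n_0=(rk+e)\bmod q$.

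Next I would perform the Euclidean division $rk+e=m'q+n_0$ with $0\le n_0<q$, so that $m'=\lfloor(rk+e)/q\rfloor$. Substituting $n_0=rk+e-m'q$ into $k\le n_0$ and rearranging gives $k\ge(m'q-e)/(r-1)$ when $r\ge2$ (for $r=1$ the lower constraint is vacuous, consistent with the degenerate denominator in $A(1,m,e)$), while the very definition of $m'$ as a floor gives $rk+e<(m'+1)q$, that is, $k<\bigl((m'+1)q-e\bigr)/r$. These are exactly the two defining inequalities of $A(r,m,e)$ once I set $m=m'+1$.

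It then remains to pin down the range of $m=m'+1$. From $0\le rk+e\le(r+1)(q-1)<(r+1)q$ I get $0\le m'\le r$, hence $1\le m\le r+1$. The only delicate point, and the main piece of bookkeeping, is excluding $m=r+1$, i.e.\ $m'=r$: in that case the lower bound reads $(rq-e)/(r-1)$, which exceeds $q$ because $e<q$ forces $rq-e>(r-1)q$, contradicting $k<q$. This leaves $0<m\le r$ and completes the argument. I expect no genuine difficulty beyond this index shift $m'\mapsto m'+1$ and the tidy handling of the two boundary cases ($r=1$, where the lower bound is vacuous, and the excluded value $m=r+1$).
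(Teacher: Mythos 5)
Your proof is correct and follows essentially the same route as the paper: both reduce $\binom{rk+e}{k}$ modulo $p$ via Lucas' theorem to a comparison between $k$ and $(rk+e)\bmod q$, and then translate that comparison into the defining inequalities of $A(r,m,e)$ with $m-1=\lfloor (rk+e)/q\rfloor$. The only (immaterial) difference is that you argue the contrapositive directly, whereas the paper describes the complementary intervals inside $0\le k<q$ and shows the binomial coefficient vanishes modulo $p$ there.
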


\begin{proof}
Because
$\binom{rk+e}{k}\equiv\binom{rk+e-mq}{k}\pmod{p}$
for any integer $m$ according to Lucas' Theorem,
$\binom{rk+e}{k}\equiv 0\pmod{p}$
holds if and only if $0\le rk+e-mq<k$, which means
$(mq-e)/r\le k<(mq-e)/(r-1)$.
These are the complementary intervals to the intervals $A(r,m,e)$
within the range $0\le k<q$.
\end{proof}

Thus, when considering finite sums $\sum_k\binom{rk+e}{k}x^k$ modulo a prime $p$,
and $q$ is any power of $p$,
the range $0\le k<q$ splits naturally into $r$ separate ranges,
possibly including empty ones such as $A(r,r,0)$.
Consequently, it is natural to look for evaluations modulo $p$ of
the partial sums
\[
\sum_{0\le k<(mq-e)/r}\binom{rk+e}{k}x^k,
\]
for $0<m\le r$,
or on the subintervals $A(r,m,e)$ in which this range decomposes naturally according to Lemma~\ref{lemma:binomial_vanish}.

When $r=2$ the ranges of Lemma~\ref{lemma:binomial_vanish} read
$0\le k<(q-e)/2$
and
$q-e\le k<q-e/2$.
Finite sums $\sum_k\binom{2k+e}{k}x^k$ over each of those two intervals were evaluated, in closed form modulo $p$,
in~\cite[Theorem~45]{MatTau:truncation}.
Because we will rely on that result to deal with the case $r=3$,
and because the latter will require a slightly different approach,
we provide a new proof of~\cite[Theorem~45]{MatTau:truncation}
by way of introduction to our new approach.
The main novelty is that we can prove the desired congruence over the first interval $0\le k<(q-e)/2$
without having to consider both intervals together, as we did in the original proof.
Here we prefer to use the letter $c$ for the indeterminate in place of $x$,
because the former bears the same relationship to the indeterminate $\beta$
as that in place when we will deal with sums $\sum_k\binom{3k+e}{k}x^k$ later.

\begin{theorem}[Part of Theorem~45 of~\cite{MatTau:truncation}]\label{thm:2k+e_su_k}
Let $q$ be a power of an odd prime $p$, let $1\le m\le 2$, and let $0\le e\le q$.
In the polynomial ring $\Z[\beta]$, setting $c=\beta(1-\beta)$ and $\alpha=1-\beta$, we have
\begin{equation*}
\sum_{0\le k<(mq-e)/2}\binom{2k+e}{k}c^k
\equiv
\frac{\alpha^{mq-e}-\beta^{mq-e}}{\alpha-\beta}
\pmod{p}.
\end{equation*}
\end{theorem}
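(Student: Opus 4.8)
The plan is to derive the congruence directly from the closed form $\sum_{k\ge0}\binom{2k+e}{k}c^k=\frac{1}{(1-2\beta)(1-\beta)^e}$ established at the end of Section~\ref{sec:series}, by the truncation-and-reduction technique. Throughout I regard every expression as a formal power series in $\beta$ over $\F_p$, writing $\alpha=1-\beta$, so that $c=\alpha\beta$, $\alpha+\beta=1$, and, crucially, $\alpha^q=(1-\beta)^q=1-\beta^q$ by the Frobenius identity in characteristic $p$. Set $F=\frac{1}{(\alpha-\beta)\alpha^e}$ for the closed form and $U_n=\frac{\alpha^n-\beta^n}{\alpha-\beta}$, so that the right-hand side is $U_{mq-e}$. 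Both the partial sum $S$ on the left and $U_{mq-e}$ are polynomials in $\beta$ of degree less than $mq-e$; hence it suffices to prove the single congruence $S\equiv U_{mq-e}\pmod{\beta^{mq-e}}$ in $\F_p[[\beta]]$, since two polynomials of degree $<mq-e$ agreeing modulo $\beta^{mq-e}$ coincide.

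First I would rewrite the target. Factoring $\alpha^{mq-e}=(\alpha^q)^m\alpha^{-e}$ and $\beta^{mq-e}=(\beta^q)^m\beta^{-e}$ gives $U_{mq-e}=(\alpha^q)^mF-(\beta^q)^mG$, where $G=\frac{1}{(\alpha-\beta)\beta^e}$. The term $(\beta^q)^mG=\beta^{mq-e}/(\alpha-\beta)$ is divisible by $\beta^{mq-e}$ and so drops out modulo $\beta^{mq-e}$; substituting $\alpha^q=1-\beta^q$ then yields $U_{mq-e}\equiv(1-\beta^q)^mF\pmod{\beta^{mq-e}}$. Next I would analyze the left-hand side using Lemma~\ref{lemma:binomial_vanish}. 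For $m=1$ the coefficients $\binom{2k+e}{k}$ vanish modulo $p$ throughout the gap $(q-e)/2\le k<q-e$ separating $A(2,1,e)$ from $A(2,2,e)$, so one may harmlessly extend the summation from $k<(q-e)/2$ up to $k<q-e$ without changing $S$ modulo $p$; the first omitted nonzero term occurs at $k=q-e$ and contributes only in $\beta$-degrees at least $q-e$, whence $S\equiv F\pmod{\beta^{q-e}}$. Since $(1-\beta^q)F\equiv F\pmod{\beta^{q-e}}$ as well, the target is matched and this case is complete. This is the self-contained treatment of the first interval alluded to above, requiring no reference to the second one.

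The main obstacle is the case $m=2$, where the target carries a genuine Frobenius correction: since $(1-\beta^q)^2F\equiv F-2\beta^qF\pmod{\beta^{2q-e}}$, a matching correction must be produced on the left. Here the vanishing of the gap coefficients only yields $S\equiv F\pmod{\beta^q}$, and one must additionally control the tail $F-S\equiv\sum_{k\ge q}\binom{2k+e}{k}c^k\pmod p$ in $\beta$-degrees up to $2q-e-1$. The device I would use is Lucas' theorem applied at the level of the $q$-ary digits: writing $k=q+j$ one finds $\binom{2q+2j+e}{q+j}\equiv2\binom{2j+e}{j}\pmod p$ when $2j+e<q$, while the coefficient vanishes for $(q-e)/2\le j<q-e$; consequently the tail collapses to $2c^q\sum_{0\le j<(q-e)/2}\binom{2j+e}{j}c^j\equiv2c^qF\equiv2\beta^qF\pmod{\beta^{2q-e}}$, using $c^q=(1-\beta^q)\beta^q\equiv\beta^q$. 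Combined with the preceding step this gives $S\equiv F-2\beta^qF\equiv U_{2q-e}\pmod{\beta^{2q-e}}$, completing the proof. The delicate points are exactly this digit bookkeeping for the wrap-around terms $k\ge q$ and the parity of $mq-e$, which governs whether the half-integer bound $(mq-e)/2$ falls strictly between two integers.
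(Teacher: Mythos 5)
Your argument is correct, and for $m=1$ it coincides with the paper's proof: truncate the closed form $\sum_k\binom{2k+e}{k}c^k=1/\bigl((1-2\beta)(1-\beta)^e\bigr)$ modulo $(\beta^{q-e},p)$ using the vanishing of the gap coefficients and the fact that $c^k$ has $\beta$-order $k$, then match two polynomials of degree less than $q-e$. Where you genuinely diverge is the case $m=2$. The paper splits the range $0\le k<q-e/2$ into the two Lucas intervals inside $[0,q)$ and converts the second one, $q-e\le k<q-e/2$, via the reflection $\binom{n}{k}=\binom{n}{n-k}$ and Lucas into $c^{q-e}$ times a sum of the same shape with $e$ replaced by $q-e$, to which the case $m=1$ applies; the Frobenius correction then emerges from adding $\bigl(\alpha^q\beta^{q-e}-\alpha^{q-e}\beta^q\bigr)/(\alpha-\beta)$ to $U_{q-e}$. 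You instead work with the tail $\sum_{k\ge q}$ of the full series modulo $\beta^{2q-e}$ and evaluate it by the leading-$q$-ary-digit factorization $\binom{2q+(2j+e)}{q+j}\equiv\binom{2}{1}\binom{2j+e}{j}\pmod{p}$, producing the correction $-2\beta^qF$ directly and keeping the same $e$ throughout. Both routes reduce $m=2$ to $m=1$ through Lucas' theorem; yours is arguably more uniform (it is exactly the device the paper itself resorts to for the awkward case $m=3$ of Theorem~\ref{thm:3k+e_su_k}, where the analogue of the reflection trick is unavailable and the factor $\binom{3q}{q}\equiv 3$ plays the role of your $\binom{2}{1}\equiv 2$), while the paper's reflection argument has the advantage of yielding a clean standalone congruence for the sum over the second interval alone. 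One cosmetic caveat: your auxiliary series $G=1/\bigl((\alpha-\beta)\beta^e\bigr)$ is only a Laurent series in $\beta$, though the combination $(\beta^q)^mG=\beta^{mq-e}/(\alpha-\beta)$ you actually use is a genuine power series divisible by $\beta^{mq-e}$, so no harm is done.
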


Although the right-hand side of the congruence does not look like a polynomial in $\beta$, it reduces to one after simplification.

\begin{proof}
We will first prove the case $m=1$, and then deduce the case $n=2$ from that.
We start from the identity
\[
\sum_{k=0}^\infty\binom{2k+e}{k}c^k
=
\frac{1}{(1-2\beta)(1-\beta)^e},
\]
which takes place in the power series ring $\Q[[\beta]]$, where $c=\beta(1-\beta)$.
However, because all coefficients are integers it actually takes place in $\Z[[\beta]]$.
After multiplying both sides by $(1-\beta)^e$ and then by $(1-2\beta)^q\equiv 1\pmod{(\beta^q,p)}$ we obtain
\[
\sum_{0\le k<(q-e)/2}\binom{2k+e}{k}c^k
\equiv\frac{1}{(1-2\beta)(1-\beta)^e}
\pmod{(\beta^{q-e},p)}
\]
in $\Z[[\beta]]$.
In fact, $\binom{2k+e}{k}\equiv 0\pmod{p}$ for $(q-e)/2\le k<q-e$
according to Lemma~\ref{lemma:binomial_vanish},
and
$\sum_{k\ge q-e}\binom{2k+e}{k}c^k\equiv 0\pmod{(\beta^{q-e},p)}$.
We also have
\[
\frac{(1-\beta)^{q-e}-\beta^{q-e}}{1-2\beta}
\equiv
\frac{1}{(1-2\beta)(1-\beta)^e}
\pmod{(\beta^{q-e},p)}.
\]
The left-hand sides of the previous two congruences
are polynomials of degree less than $q-e$, and hence so is their difference.
However, when the difference is viewed as a polynomial in $\F_p[\beta]$,
we have just shown that it is a multiple of $\beta^{q-e}$.
Consequently the difference must be zero in $\F_p[\beta]$,
and the desired conclusion follows.

Now we may deduce the case $m=2$ from the case $m=1$.
Using Lucas'theorem and the basic
binomial coefficient identity $\binom{n}{k}=\binom{n}{n-k}$ we find
\begin{align*}
\sum_{q-e\le k<q-e/2}\binom{2k+e}{k}c^k
&=
c^{q-e}
\sum_{0\le k<e/2}\binom{2k+2q-e}{k+q-e}c^k
\\&\equiv
c^{q-e}
\sum_{0\le k<e/2}\binom{2k+q-e}{k+q-e}c^k
\pmod{p}
\\&=
c^{q-e}
\sum_{0\le k<e/2}\binom{2k+q-e}{k}c^k.
\end{align*}
Now the case $m=1$ with $q-e$ in place of $e$ yields
\[
\sum_{q-e\le k<q-e/2}\binom{2k+e}{k}c^k
\equiv
\frac{\alpha^q\beta^{q-e}-\alpha^{q-e}\beta^q}{\alpha-\beta}
\pmod{p},
\]
and adding this to the sum over the range $0\le k<(q-e)/2$
we easily reach the desired conclusion.
\end{proof}

After having reviewed the case $r=2$, we move on to the case $r=3$, which is the one of main interest in this paper.
According to Lemma~\ref{lemma:binomial_vanish},
we are interested in evaluating sums $\sum_{k}\binom{3k+e}{k}x^k$ modulo $p$, for $0\le e<q$,
over each of the three finite ranges
\[
0\le k<(q-e)/3,
\quad
(q-e)/2\le k<(2q-e)/3,
\quad
q-e/2\le k<q-e/3.
\]

\begin{theorem}\label{thm:3k+e_su_k}
Let $q$ be a power of an odd prime $p$, let $1\le m\le 3$,
and let $0\le e <q$.
In the polynomial ring $\Z[\beta]$, setting $c=\beta(1-\beta)$, $\alpha=1-\beta$, and $x=c^2/(1-c)^3$, we have
\begin{align*}
2&(2+c)(1-c)^{mq-1-e}
\sum_{0\le k<(mq-e)/3}\binom{3k+e}{k}
x^k
\\&\equiv
(\alpha^{mq-e}+\beta^{mq-e})
+3\frac{\alpha^{mq-e}-\beta^{mq-e}}{\alpha-\beta}
-2(-c)^{mq-e}
\pmod{p}.
\end{align*}
\end{theorem}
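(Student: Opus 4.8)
The plan is to run the truncation-and-reduction argument of Theorem~\ref{thm:2k+e_su_k} starting from~\eqref{eq:3k+e_series}, adding one structural observation to control the high-degree terms. Write $S_m=\sum_{0\le k<(mq-e)/3}\binom{3k+e}{k}x^k$ and let $R_m$ denote the right-hand side of the asserted congruence; set $N=mq-e$. First I would record $2+c=(1+\beta)(2-\beta)$, $1-c=1-\beta+\beta^2$ and $\alpha-\beta=1-2\beta$, and rewrite~\eqref{eq:3k+e_series} as
\[
F(\beta):=\sum_{k=0}^{\infty}\binom{3k+e}{k}x^k=\frac{(1-c)^{e+1}}{(1+\beta)(\alpha-\beta)\alpha^e}.
\]
The key observation is that both $2(2+c)(1-c)^{mq-1-e}S_m$ and $R_m$ are invariant under $\beta\mapsto 1-\beta$: the former involves only the symmetric quantities $2+c$, $1-c$, $x$, while in the latter each of $\alpha^N+\beta^N$, $(\alpha^N-\beta^N)/(\alpha-\beta)$, $(-c)^N$ is symmetric. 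As $p$ is odd, the invariant ring $\F_p[\beta]^{\beta\mapsto 1-\beta}$ equals $\F_p[c]$, so both sides are polynomials in $c$. A direct computation shows each has $c$-degree exactly $N$ with the same leading coefficient $-2(-1)^N$ (from $-2(-c)^N$ on one side and from the $k=0$ summand $2(2+c)(1-c)^{mq-1-e}$ on the other). Hence it suffices to prove agreement modulo $\beta^N$: the difference would reduce to a single term $a_Nc^N$, and the matching leading coefficients force $a_N=0$.

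Next I would multiply through, obtaining in $\F_p[[\beta]]$ the exact identity
\[
2(2+c)(1-c)^{mq-1-e}F(\beta)=\frac{2(2-\beta)(1-c)^{mq}}{(\alpha-\beta)\alpha^e}.
\]
Since $x^k$ has $\beta$-adic valuation exactly $2k$, and Lemma~\ref{lemma:binomial_vanish} locates the surviving binomials, the tail $F-S_m$ has valuation at least $V_m$, where $V_1=q-e$ and $V_2=2q-e$ (the first surviving tail term lies at the start of the next range $A(3,m+1,e)$), while $V_3=2q$ (here the first surviving term is $\binom{3q+e}{q}\equiv 3$, contributing $3x^q$). Multiplying by the polynomial factor preserves this, so $2(2+c)(1-c)^{mq-1-e}S_m$ is congruent modulo $(\beta^{V_m},p)$ to the right-hand side above. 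Reducing that via the Frobenius identities $\alpha^q=1-\beta^q$ and $c^q\equiv\beta^q\pmod{\beta^{2q}}$, and doing the same to $R_m$, both reduce after clearing the unit denominator $(\alpha-\beta)\alpha^e$ to $2(2-\beta)(1-\beta^q)^m$; this proves the congruence modulo $\beta^{V_m}$.

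For $m=1$ and $m=2$ one has $V_m=N$, so together with the leading-coefficient match this completes those cases. The main obstacle is $m=3$, where $V_3=2q<N=3q-e$: truncation determines only the coefficients of $c^0,\dots,c^{2q-1}$, leaving the top $q-e+1$ coefficients undetermined. To recover them I would write $S_3\equiv S_2+T\pmod p$ with $T=\sum_{k\in A(3,3,e)}\binom{3k+e}{k}x^k$ the top-range contribution, and combine with the proved $m=2$ identity through $(1-c)^q\equiv 1-c^q$. Evaluating $T$ is the delicate step: on $A(3,3,e)$ Lucas' theorem gives $\binom{3k+e}{k}\equiv\binom{3k+e-2q}{k}$, which together with $\binom{n}{k}=\binom{n}{n-k}$ reflects the top range onto a short low-index sum that re-enters the established machinery. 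Tracking the floors at the endpoints $q-e/2$ and $q-e/3$ and the parity of $e$ produced by this reflection is what I expect to make this the longest part of the proof; the self-similarity $\binom{3q+3k'+e}{q+k'}\equiv 3\binom{3k'+e}{k'}$ for small $k'$ offers an alternative route, setting up a recursion that relates the $m=3$ sum back to the base series.
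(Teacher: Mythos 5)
Your treatment of $m=1$ and $m=2$ is sound and is essentially the paper's own argument: truncate the closed form, reduce modulo $(\beta^{V_m},p)$ via $\alpha^{q}\equiv 1-\beta^{q}$, use the symmetry $\beta\mapsto 1-\beta$ to upgrade divisibility by $\beta^{N}$ to divisibility by $c^{N}$, and pin down the one remaining coefficient by matching the leading term $-2(-1)^{N}c^{N}$ coming from the $k=0$ summand. Your identification of the obstruction at $m=3$ (truncation only controls the congruence modulo $c^{2q}$, because $\binom{3k+e}{k}$ already survives modulo $p$ at $k=q$) is also exactly the paper's.

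The gap is in your primary plan for recovering the top $q-e+1$ coefficients when $m=3$. The reflection that works for $r=2$ does not stay inside the family here: for $k\in A(3,3,e)$ Lucas gives $\binom{3k+e}{k}\equiv\binom{3k+e-2q}{k}=\binom{3k+e-2q}{2k+e-2q}$, and substituting $j=2k+e-2q$ (so $0\le j<e/3$ and $j\equiv e\pmod 2$) turns the top index into $q+(3j-e)/2$; that is, you obtain coefficients of the shape $\binom{(3j+2q-e)/2}{j}$, whose top index grows like $\tfrac{3}{2}j$ rather than $3j$ or $2j$. These are not of the form $\binom{3j+e'}{j}$ (nor $\binom{2j+e'}{j}$), so the reflected sum does not ``re-enter the established machinery''; this is precisely why the $r=3$ case cannot be closed the way Theorem~\ref{thm:2k+e_su_k} closes the $r=2$ case. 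The route you relegate to an aside, the self-similarity $\binom{3(q+k')+e}{q+k'}\equiv 3\binom{3k'+e}{k'}$, is the one that actually works and is what the paper does: extend the summation to $0\le k<(4q-e)/3$, over which range truncation of the series does hold modulo $(c^{3q-e},p)$ because the binomials vanish on $[(3q-e)/3,q)$ and on $[(4q-e)/3,(3q-e)/2)$; the extension over $[q,(4q-e)/3)$ contributes $3x^{q}$ times the $m=1$ sum, and after expanding $(1-c)^{4q}\equiv(1-c^q)(1-3c^q)+3(1-c^q)c^{2q}$ on the closed-form side, cancelling the $m=1$ contribution via the already-proved case, and dividing by the unit $(1-c)^{q}$, one finishes with the same symmetry and leading-term argument. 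As it stands your proposal does not establish the $m=3$ case, which is the substantive part of the theorem.
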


We explicitly state the special case $e=0$ as a corollary, because
the formulas then simplify and take place in the polynomial ring
$\Z[c]$, without the explicit involvement of the
indeterminate $\beta$.

\begin{cor}\label{cor:3k_su_k}
For any power $q$ of an odd prime $p$,
in the polynomial ring $\Z[c]$, where $x=c^2/(1-c)^3$, we have
\[
2(2+c)
(1-c)^{q-1}
\sum_{0\le k<q/3}\binom{3k}{k}x^k
\equiv
1+3(1-4c)^{(q-1)/2}
+2c^q
\pmod{p},
\]
and
\[
2(2+c)
(1-c)^{2q-1}
\sum_{0\le k< 2q/3}\binom{3k}{k}x^k
\equiv
1+3(1-4c)^{(q-1)/2}
-2c^q-2c^{2q}
\pmod{p}.
\]
\end{cor}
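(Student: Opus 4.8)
The plan is to obtain both congruences simply by specializing Theorem~\ref{thm:3k+e_su_k} to $e=0$ and rewriting its right-hand side, an expression in $\beta$, as an expression in $c$ by means of the Frobenius endomorphism of $\F_p[\beta]$. Setting $e=0$, the left-hand side of the theorem is already $2(2+c)(1-c)^{mq-1}\sum_{0\le k<mq/3}\binom{3k}{k}x^k$, matching the corollary verbatim for $m=1$ and $m=2$. Thus everything reduces to evaluating modulo $p$ the three symmetric quantities $\alpha^{mq}+\beta^{mq}$, $(\alpha^{mq}-\beta^{mq})/(\alpha-\beta)$, and $(-c)^{mq}$ that occur on the right.

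The first step is to combine the relation $\alpha=1-\beta$ with the fact that $q$ is a power of $p$. Since raising to the $q$-th power is a ring endomorphism of $\F_p[\beta]$, we have $\alpha^q=(1-\beta)^q\equiv 1-\beta^q\pmod p$, whence $\alpha^q+\beta^q\equiv 1$ and $\alpha^q-\beta^q\equiv 1-2\beta^q\equiv(1-2\beta)^q\pmod p$, the last step using $2^q\equiv 2\pmod p$. Because $\alpha-\beta=1-2\beta$ and $1-4c=(1-2\beta)^2$, this gives $(\alpha^q-\beta^q)/(\alpha-\beta)\equiv(1-2\beta)^{q-1}=(1-4c)^{(q-1)/2}\pmod p$. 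Together with $(-c)^q=-c^q$, valid since $q$ is odd, assembling the right-hand side of the theorem for $m=1$ produces exactly $1+3(1-4c)^{(q-1)/2}+2c^q$, the first claimed congruence.

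For $m=2$ the plan is to square the reductions just obtained: $\alpha^{2q}\equiv(1-\beta^q)^2=1-2\beta^q+\beta^{2q}\pmod p$. The one extra identity to invoke is $c^q\equiv(\beta-\beta^2)^q\equiv\beta^q-\beta^{2q}\pmod p$, again by Frobenius, which lets me rewrite $\alpha^{2q}+\beta^{2q}\equiv 1-2\beta^q+2\beta^{2q}\equiv 1-2c^q$, while $\alpha^{2q}-\beta^{2q}\equiv 1-2\beta^q\equiv(1-2\beta)^q$, so the quotient again collapses to $(1-4c)^{(q-1)/2}$. Since $2q$ is even we have $(-c)^{2q}=c^{2q}$, and the right-hand side of the theorem for $m=2$ becomes $1-2c^q+3(1-4c)^{(q-1)/2}-2c^{2q}$, as required.

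No step here is a genuine obstacle; the only points requiring care are the elementary observations that all three quantities are symmetric in $\alpha,\beta$ and hence lie in $\F_p[c]$, and that the resulting congruences may legitimately be read in $\F_p[c]$ even though the theorem is phrased in $\F_p[\beta]$ — which is immediate, since $\F_p[c]$ embeds in $\F_p[\beta]$ via $c\mapsto\beta(1-\beta)$. I note in passing that the case $m=3$ is omitted because the corresponding range $A(3,3,0)$ is empty, so it contributes nothing beyond $m=2$ modulo $p$.
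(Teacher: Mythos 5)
Your proposal is correct and follows essentially the same route as the paper: both deduce the corollary by setting $e=0$ in Theorem~\ref{thm:3k+e_su_k} and simplifying the symmetric expressions in $\alpha,\beta$ using $(\alpha-\beta)^2=1-4c$ and $\alpha\beta=c$. The only cosmetic difference is that for $m=2$ you expand $(1-\beta^q)^2$ via Frobenius and invoke $c^q\equiv\beta^q-\beta^{2q}$, where the paper instead uses $\alpha^{2q}+\beta^{2q}=(\alpha^q+\beta^q)^2-2(\alpha\beta)^q$ and $\alpha^{2q}-\beta^{2q}=(\alpha^q+\beta^q)(\alpha^q-\beta^q)$; both yield the same reductions.
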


\begin{proof}
When $e=0$, for $m=1$ the right-hand side of the congruence of
Theorem~\ref{thm:3k+e_su_k} reads
\[
(\alpha^{q}+\beta^{q})
+3\frac{\alpha^{q}-\beta^{q}}{\alpha-\beta}
-2(-c)^{q}
\equiv
1
+3(\alpha-\beta)^{q-1}
+2c^{q}
\pmod{p},
\]
and the conclusion follows because
$(\alpha-\beta)^2=(1-2\beta)^2=1-4\beta+4\beta^2=1-4c$.
For $m=2$ the desired conclusion follows similarly because
$\alpha^{2q}+\beta^{2q}=(\alpha^{q}+\beta^{q})^2-2\alpha^q\beta^q
\equiv 1-2c^q\pmod{p}$
and
$\alpha^{2q}-\beta^{2q}=(\alpha^{q}+\beta^{q})(\alpha^{q}-\beta^{q})\equiv(\alpha-\beta)^q\pmod{p}$.
Of course when $e=0$ we do not get anything new for $m=3$.
\end{proof}

According to Corollary~\ref{cor:3k_su_k} the sums over the two ranges are related by the congruence
\[
\sum_{0\le k< q/3}\binom{3k}{k}x^k
-(1-c^q)\sum_{0\le k< 2q/3}\binom{3k}{k}x^k
\equiv
c^q\frac{(2+c)^{q-1}}{(1-c)^{q-1}}
\pmod{p}.
\]

Theorem~\ref{thm:3k+e_su_k} and Corollary~\ref{cor:3k_su_k}
remain trivially valid also when $p=2$, but provide no information
on the corresponding sums.
According to Lucas' theorem, the binomial coefficient $\binom{3k}{k}$ is odd
precisely when the binary expansion of $k$
contains no adjacent digits equal to $1$.
A well-known combinatorial characterization of the Fibonacci numbers then implies
$\sum_{0\le k<2^r}\binom{3k}{k}\equiv F_{r+2}\pmod{2}$.
We will not pursue the case $p=2$ further in this paper.

\section{Proof of Theorem~\ref{thm:3k+e_su_k}}\label{section:proof_3k+e_su_k}

We will deduce the desired congruences from the closed form
for the corresponding series, which we gave in Equation~\eqref{eq:3k+e_series}.
Because $1-\beta+\beta^2=1-c$ and $(2-\beta)(1+\beta)=2+c$
we may rewrite that identity in the form
\begin{equation*}
\sum_{k=0}^{\infty}\binom{3k+e}{k}\left(\frac{c^2}{(1-c)^3}\right)^k
=
\frac{1-c}{2(2+c)}
\left(1+\frac{3}{1-2\beta}\right)
\frac{(1-c)^e}{(1-\beta)^e}.
\end{equation*}

We start with the case $m=1$.
In order to clear denominators of the left-hand side of the above identity
in the first range $0\le k<q/3$ that we are interested in,
we multiply both sides by $(1-c)^{q-1-e}$.
After further multiplying both sides by $2(2+c)$ we find
\begin{equation}\label{eq:3q+e_identity}
2(2+c)
\sum_{k=0}^{\infty}\binom{3k+e}{k}
c^{2k}(1-c)^{q-1-e-3k}
=
\frac{(1-c)^q}{(1-\beta)^e}
\left(1+\frac{3}{1-2\beta}\right),
\end{equation}
to be viewed as an identity in the power series ring $\Q[[\beta]]$, and actually $\Z_{(p)}[[\beta]]$ (so we can view it modulo $p$).
Now we produce congruences, in turn, for each side of Equation~\eqref{eq:3q+e_identity}.

Because the binomial coefficient $\binom{3k+e}{k}$
is a multiple of $p$
for $(q-e)/3\le k<(q-e)/2$,
the left-hand side of Equation~\eqref{eq:3q+e_identity} satisfies
\begin{equation}\label{eq:LHS}
\begin{aligned}
2&(2+c)
\sum_{k=0}^{\infty}\binom{3k+e}{k}
c^{2k}(1-c)^{q-1-e-3k}
\\&\equiv
2(2+c)
\sum_{0\le k<(q-e)/3}\binom{3k+e}{k}
c^{2k}(1-c)^{q-1-e-3k}
\pmod{(c^{q-e},p)}.
\end{aligned}
\end{equation}
The right-hand side of this congruence
is a polynomial in
$c$, of degree $q-e$ and leading term $-2(-c)^{q-e}$.

Before we consider the right-hand side of
Equation~\eqref{eq:3q+e_identity}, note that
for $m\in\{1,2,3\}$ we have
\[
1-mc^q
=1-m\beta^q+m\beta^{2q}
\equiv(1-\beta^q)^m
\equiv\alpha^{mq}
\pmod{(\beta^{mq},p)},
\]
where we have set $\alpha=1-\beta$.
Consequently,
\begin{equation}\label{eq:1-mc^q}
\begin{aligned}
\frac{1-mc^q}{(1-\beta)^e}
&\equiv
\alpha^{mq-e}\pm\beta^{mq-e}
\pmod{(\beta^{mq-e},p)}.
\end{aligned}
\end{equation}
In particular, the right-hand side of
Equation~\eqref{eq:3q+e_identity} satisfies
\begin{equation}\label{eq:1-c^q}
\frac{(1-c)^q}{(1-\beta)^e}
\left(1+\frac{3}{1-2\beta}\right)
\equiv
(\alpha^{q-e}+\beta^{q-e})
+3\frac{\alpha^{q-e}-\beta^{q-e}}{\alpha-\beta}
\pmod{(\beta^{q-e},p)}.
\end{equation}

Combining Equations~\eqref{eq:LHS} and ~\eqref{eq:1-c^q} we obtain
\begin{equation}\label{eq:LHS=RHS}
\begin{aligned}
2&(2+c)
\sum_{0\le k<(q-e)/3}\binom{3k+e}{k}
c^{2k}(1-c)^{q-1-e-3k}
\\&
\equiv
(\alpha^{q-e}+\beta^{q-e})
+3\frac{\alpha^{q-e}-\beta^{q-e}}{\alpha-\beta}
\pmod{(\beta^{q-e},p)}.
\end{aligned}
\end{equation}
The right-hand side of this congruence is invariant under
interchanging $\beta$ with $\alpha=1-\beta$, and hence
can be written as a polynomial in their elementary symmetric polynomials
$\alpha+\beta=1$ and $\alpha\beta=c$.
Hence the right-hand side of Equation~\eqref{eq:LHS=RHS}
is actually a polynomial in $c=\beta(1-\beta)$.
Because $\beta$ and $1-\beta$ are coprime,
it follows that the congruence actually holds modulo $(c^{q-e},p)$.
Also, because the right-hand side of Equation~\eqref{eq:LHS=RHS}
has degree at most $q-e$ as a polynomial in $\beta$,
it has degree at most $(q-e)/2$ as a polynomial in $c$,
and hence less than $q-e$.
The desired congruence modulo $p$ follows because
the left-hand side of Equation~\eqref{eq:LHS=RHS}
has leading term  $-2(-c)^{q-e}$, as noted earlier.

Now we deal with the case $m=2$,
where the finite sum is over the range $0\le k<(2q-e)/3$.
We proceed in a similar fashion,
but in order to clear denominators over the longer range we first
need to multiply both sides of Equation~\eqref{eq:3q+e_identity} by
a further factor $(1-c)^q$.
Because $\binom{3k+e}{k}$ is a multiple of $p$
for $(2q-e)/3\le k<(2q-e)/2$,
the left-hand side of Equation~\eqref{eq:3q+e_identity} multiplied by $(1-c)^q$
satisfies
\begin{equation}\label{eq:LHS_midrange}
\begin{aligned}
2&(2+c)
\sum_{k=0}^{\infty}\binom{3k+e}{k}
c^{2k}(1-c)^{2q-1-e-3k}
\\&\equiv
2(2+c)
\sum_{0\le k<(2q-e)/3}\binom{3k+e}{k}
c^{2k}(1-c)^{2q-1-e-3k}
\pmod{(c^{2q-e},p)}.
\end{aligned}
\end{equation}
As a polynomial in $c$ the right-hand side of this congruence
has degree $2q-e$ and leading term
$2(-c)^{2q-e}$.

The right-hand side of Equation~\eqref{eq:3q+e_identity}
also needs to be multiplied by $(1-c)^q$, and then the result contains the factor
$(1-c)^{2q}\equiv 1-2c^q\pmod{c^{2q}}$.
Using Equation~\eqref{eq:1-mc^q} for $m=2$
we find that the right-hand side of
Equation~\eqref{eq:3q+e_identity} multiplied by $(1-c)^q$
satisfies
\[
\frac{(1-c)^{2q}}{(1-\beta)^e}
\left(1+\frac{3}{1-2\beta}\right)
\equiv
(\alpha^{2q-e}+\beta^{2q-e})
+3\frac{\alpha^{2q-e}-\beta^{2q-e}}{\alpha-\beta}
\pmod{(\beta^{2q-e},p)}.
\]
Combining this congruence with Equation~\eqref{eq:LHS_midrange}
we find a version of the desired conclusion as a congruence modulo
$(\beta^{2q-e},p)$.
Arguing as we did for the case $m=1$, we observe how symmetry makes
the congruence hold modulo $(c^{2q-e},p)$.
Finally, keeping track of the leading term
we obtain the desired conclusion for $m=2$.

To deal with the final case $m=3$,
where the finite sum is over the range $0\le k<(3q-e)/3$,
we cannot proceed exactly in the same way as we have just done for $m=1,2$.
In fact, a congruence analogous to Equation~\eqref{eq:LHS_midrange},
with both sides multiplied by a further factor
$(1-c)^q$, and the summation at the right-hand side
extended to $0\le k<(3q-e)/3$, does not hold modulo $(c^{3q-e},p)$
as we would need to carry out a similar argument,
but only modulo $(c^{2q},p)$.
That is because
$\binom{3k+e}{k}$ is not a multiple of $p$
for $(3q-e)/3\le k<(3q-e)/2$,
but only on the shorter range $(3q-e)/3\le k<q$.

To overcome this obstacle we evaluate a longer partial sum,
over the range $0\le k<(4q-e)/3=q+(q-e)/3$,
of the left-hand side of Equation~\eqref{eq:3q+e_identity}
multiplied by $(1-c)^{3q}$.
According to Lucas' theorem,
for $q\le k<(4q-e)/3$ we have
\[
\binom{3k+e}{k}\equiv\binom{3q}{q}\binom{3(k-q)+e}{k-q}
\equiv 3\binom{3(k-q)+e}{k-q}\pmod{p},
\]
and for $(4q-e)/3\le k<(3q-e)/2$ we have
\[
\binom{3k+e}{k}\equiv\binom{4q}{q}\binom{3k-4q+e}{k-q}
\equiv 0\pmod{p}.
\]
Consequently, splitting the summation range
$0\le k<(4q-e)/3$
into two portions
$0\le k<(3q-e)/3$
and
$q\le k<(4q-e)/3=q+(q-e)/3$
(with the range $(3q-e)/3\le k<q$ between them giving no
contribution according to Lemma~\ref{lemma:binomial_vanish}), we find
\begin{equation}\label{eq:LHS_fullrange}
\begin{aligned}
2&(2+c)
\sum_{k=0}^{\infty}\binom{3k+e}{k}
c^{2k}(1-c)^{4q-1-e-3k}
\\&\equiv
2(2+c)
\sum_{0\le k<(4q-e)/3}\binom{3k+e}{k}
c^{2k}(1-c)^{4q-1-e-3k}
\pmod{(c^{3q-e},p)}
\\&\equiv
(1-c)^q 2(2+c)\sum_{0\le k<(3q-e)/3}\binom{3k+e}{k}
c^{2k}(1-c)^{3q-1-e-3k}
\\&
\quad+ 3c^{2q}2(2+c)\sum_{0\le k<(q-e)/3}\binom{3k+e}{k}
c^{2k}(1-c)^{q-1-e-3k}\pmod{p}.
\end{aligned}
\end{equation}

The right-hand side of Equation~\eqref{eq:3q+e_identity}
also needs to be multiplied by $(1-c)^{3q}$, and then the result contains the factor
$(1-c)^{4q}\equiv (1-c^q)(1-3c^q)+3(1-c^q)c^{2q}\pmod{c^{3q}}$.
Using Equation~\eqref{eq:1-mc^q} for $m=3$,
and Equation~\eqref{eq:1-c^q}, we find
\[
\begin{aligned}
\frac{(1-c)^{4q}}{(1-\beta)^e}&
\left(1+\frac{3}{1-2\beta}\right)
\\&\equiv
(1-c)^q\left(
(\alpha^{3q-e}+\beta^{3q-e})
+3\frac{\alpha^{3q-e}-\beta^{3q-e}}{\alpha-\beta}\right)\\
&\quad+
3c^{2q}\left(
(\alpha^{q-e}+\beta^{q-e})
+3\frac{\alpha^{q-e}-\beta^{q-e}}{\alpha-\beta}\right)
\pmod{(\beta^{3q-e},p)}.
\end{aligned}
\]
Using our conclusion in the case $m=1$ we find
\begin{align*}
(1-c)^q &2(2+c)\sum_{0\le k<(3q-e)/3}\binom{3k+e}{k}
c^{2k}(1-c)^{3q-1-e-3k}
\\&\equiv
(1-c)^q\left(
(\alpha^{3q-e}+\beta^{3q-e})
+3\frac{\alpha^{3q-e}-\beta^{3q-e}}{\alpha-\beta}\right)
\pmod{(\beta^{3q-e},p)}.
\end{align*}
Because the factor $(1-c)^q$ is coprime with the modulus $\beta^{3q-e}$,
we deduce
\begin{align*}
2&(2+c)\sum_{0\le k<(3q-e)/3}\binom{3k+e}{k}
c^{2k}(1-c)^{3q-1-e-3k}
\\&\equiv
(\alpha^{3q-e}+\beta^{3q-e})
+3\frac{\alpha^{3q-e}-\beta^{3q-e}}{\alpha-\beta}
\pmod{(\beta^{3q-e},p)}.
\end{align*}
Arguing as we did in previous cases, the right-hand side is actually
a polynomial in $c$, and hence the congruence holds modulo $(c^{3q-e},p)$.
As a polynomial in $c$ the right-hand side has degree less than $3q-e$,
and after accounting for the leading term of the left-hand side, which is $2(-c)^{3q-e}$, we obtain the desired conclusion for $m=3$.

The proof of Theorem~\ref{thm:3k+e_su_k} is now complete.

\section{Exploiting polynomial congruences}\label{sec:exploiting}

Working modulo $c^q-c$, and conveniently separating
the initial term of the summation in the congruences of
Corollary~\ref{cor:3k_su_k},
we deduce the weaker but simpler congruences
\begin{equation}\label{eq:ZWS_short}
2(2+c)
\sum_{0<k<q/3}\binom{3k}{k}
x^k
\equiv
-3+3(1-4c)^{(q-1)/2}
\pmod{(c^q-c,p)},
\end{equation}
and
\begin{equation}\label{eq:ZWS}
2(2+c)(1-c)
\sum_{0<k<q}\binom{3k}{k}
x^k
\equiv
-3+3(1-4c)^{(q-1)/2}
\pmod{(c^q-c,p)},
\end{equation}
which take place in the polynomial ring $\Z[c]$, with $x=c^2/(1-c)^3$.
In particular, when evaluating those sums on a $p$-adic integer $c$ these congruences may be used in place of the more general Corollary~\ref{cor:3k_su_k},
as $c^p\equiv c\pmod{p}$ then.
In fact, the first of a set of four congruences proved in~\cite[Theorem~1.1]{Sun:sums_higher_Catalan} amounts to Equation~\eqref{eq:ZWS} evaluated on a $p$-adic integer $c$,
with $c\not\equiv 0,1,-2\pmod{p}$.
Although Equations~\eqref{eq:ZWS_short} and~\eqref{eq:ZWS}
give no information when $c=-2$,
the corresponding value for $x$ is also obtained for $c=1/4$,
where they give
$
\sum_{0<k<q/3}\binom{3k}{k}(4/27)^k
\equiv -2/3\pmod{p},
$
and
$
\sum_{0<k<q}\binom{3k}{k}(4/27)^k
\equiv -8/9\pmod{p}.
$
The latter congruence appeared in~\cite[Theorem~3.1]{Sun:sums_higher_Catalan}.

The fact that Equations~\eqref{eq:ZWS_short} and ~\eqref{eq:ZWS}
have the same right-hand side shows that the sums over the ranges $0<k<q/3$ and
$0<k<q$ are related in a simple way when $c\in\F_q$.
In particular, for $c\in\F_q\setminus\{ 1\}$
either sum vanishes if and only if the other one does.
Our next result determines when the sum over the short range vanishes (modulo $p$).

\begin{theorem}\label{thm:zero}
Let $p>3$ be a prime and let $q$ be a power of $p$, and
let $a\in\F_q$ with $a\neq 0, 1/9,4/27$.
Then the equality
$
\sum_{0<k<q/3}\binom{3k}{k}a^k
=0
$
holds if, and only if,
the polynomial $a(1-z)^3-z^2$ has three roots in $\F_q$.
\end{theorem}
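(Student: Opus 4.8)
The plan is to exploit that the roots $c$ of $a(1-z)^3-z^2$ are exactly the solutions of $c^2/(1-c)^3=a$, i.e.\ the $c$-preimages of $a$ under the substitution used in Equation~\eqref{eq:3k+e_series}. The hypotheses keep this cubic well behaved: $a\neq0$ makes it genuinely cubic, its discriminant works out to $(4-27a)/a^3$ so that $a\neq4/27$ makes it separable, $z=1$ is never a root, and the only $a$ for which $1/4$ or $-2$ occurs as a root is $a=4/27$. Writing $S=\sum_{0<k<q/3}\binom{3k}{k}a^k$, the first step is: for any root $c_0\in\F_q$, substitute $c=c_0$ into the polynomial congruence~\eqref{eq:ZWS_short}; since $c_0\in\F_q$ kills the modulus $c^q-c$, this yields
\[
2(2+c_0)\,S\equiv-3+3(1-4c_0)^{(q-1)/2}\pmod p,
\]
so (using $c_0\neq-2$) we get $S=0$ exactly when $(1-4c_0)^{(q-1)/2}=1$, i.e.\ when $1-4c_0$ is a nonzero square in $\F_q$, equivalently $\beta_0=(1-\sqrt{1-4c_0})/2\in\F_q$.

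The second step links one rational preimage to all of them. Setting $a=\psi(\beta):=\beta^2(1-\beta)^2/(1-\beta+\beta^2)^3$, one checks that $\psi$ is invariant not only under $\beta\mapsto1-\beta$ (visible from $c=\beta(1-\beta)$) but also under $\beta\mapsto1/\beta$; these generate the six-element anharmonic group, so the cover $\beta\mapsto a$ is Galois with group isomorphic to $\Sym_3$ and the six $\beta$-preimages of $a$ form a single orbit $\{\beta,\,1-\beta,\,1/\beta,\,1/(1-\beta),\,(\beta-1)/\beta,\,\beta/(\beta-1)\}$. Hence one rational $\beta_0\in\F_q$ forces all six, and thus all three $c_i=\beta_i(1-\beta_i)$, into $\F_q$: \emph{some} $\F_q$-root $c_0$ has $\beta_0\in\F_q$ if and only if $a(1-z)^3-z^2$ splits completely over $\F_q$. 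This settles two of the three Frobenius types. If the cubic splits, the three distinct roots give $S=\frac{3((1-4c_i)^{(q-1)/2}-1)}{2(2+c_i)}$, and since $S$ is a single value all three numerators must vanish, so every $1-4c_i$ is a square and $S=0$. If the cubic factors as a linear times an irreducible quadratic, there is one $\F_q$-root $c_0$, and $S=0$ would give $\beta_0\in\F_q$, hence full splitting by the symmetry — a contradiction; so $S\neq0$.

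The remaining, genuinely harder, case is when $a(1-z)^3-z^2$ is irreducible over $\F_q$: there is no $\F_q$-root, so the reduced congruence~\eqref{eq:ZWS_short} cannot be evaluated directly. Here I would pass to $\F_{q^3}$, where the cubic splits with roots $c_i=\beta_i(1-\beta_i)$ and Frobenius permutes the $\beta$-orbit as a three-cycle, so a suitable root satisfies $\beta_1^q=1/(1-\beta_1)$ (the inverse three-cycle gives $\beta_1^q=(\beta_1-1)/\beta_1$). Substituting $c_1$ into the \emph{unreduced} congruence of Corollary~\ref{cor:3k_su_k} and simplifying $c_1^q$, $(1-c_1)^{q-1}$ and $(1-4c_1)^{(q-1)/2}$ via $\beta_1^q=1/(1-\beta_1)$ together with $1-c=1-\beta+\beta^2$ and $2+c=(2-\beta)(1+\beta)$, all the Frobenius twists collapse and one obtains
\[
S\equiv\frac{-3\,(\beta_1^3-3\beta_1^2+1)}{(1-2\beta_1)(2+\beta_1-\beta_1^2)}\pmod p
\]
(and the companion cubic $\beta_1^3-3\beta_1+1$ in the other three-cycle case). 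Thus $S=0$ forces $\beta_1^3-3\beta_1^2+1=0$, and a direct reduction of $(1-\beta+\beta^2)^3-9\beta^2(1-\beta)^2$ modulo that cubic shows it vanishes, i.e.\ $\psi(\beta_1)=1/9$.

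Combining the three cases yields the theorem: for $a\neq0,1/9,4/27$ one has $S=0$ if and only if $a(1-z)^3-z^2$ splits completely over $\F_q$, and the role of the excluded value $1/9$ is precisely that it is the unique $a$ for which an irreducible cubic can still give $S=0$. I expect this irreducible-case computation — the passage to $\F_{q^3}$ and the identification $\beta^3-3\beta^2+1=0\Rightarrow a=1/9$ — to be the main obstacle, the earlier steps being comparatively routine once the anharmonic symmetry is in hand.
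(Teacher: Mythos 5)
Your argument is correct, and its forward half (full splitting forces the sum to vanish) is the same as the paper's: both substitute each root into the reduced congruence \eqref{eq:ZWS_short} and use distinctness of the three roots to exclude the alternative value $-3/(2+c_i)$ — your phrase ``all three numerators must vanish'' silently relies on that distinctness, since a priori all numerators could equal $-6$ with a common value $-3/(2+c_i)$. The converse is where you genuinely diverge. The paper makes no case distinction on the factorization type: it takes an arbitrary root $c$ of $a(1-z)^3-z^2$ in $\barFq$, feeds $S=0$ into the unreduced congruence of Corollary~\ref{cor:3k_su_k}, squares the resulting relation $1+c-2c^q=(1-4c)^{(q-1)/2}(1-c)$, and factors out $c^q-c$; the surviving factor forces $(2+c)^2(1-4c)=(1-c)^3$, i.e.\ $a=1/9$, so $c^q=c$ for every root at once. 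You instead classify the Frobenius type, use the anharmonic $\Sym_3$-action on the six $\beta$-preimages of $a$ to dispose of the linear-times-quadratic case, and in the irreducible case pass to $\F_{q^3}$, where $\beta_1^q=1/(1-\beta_1)$ or its inverse, to get the closed form $S=-3(\beta_1^3-3\beta_1^2+1)/\bigl((1-2\beta_1)(2+\beta_1-\beta_1^2)\bigr)$; I verified this formula and the vanishing of $(1-\beta+\beta^2)^3-9\beta^2(1-\beta)^2$ modulo $\beta^3-3\beta^2+1$, so the step goes through. Your route is longer and leaves a little to the reader (that the $\beta$-fibre is a free $\Sym_3$-orbit under the stated exclusions, and note that the ``if'' half of your claim ``some $\F_q$-root has $\beta_0\in\F_q$ iff the cubic splits'' is neither proved by the orbit argument nor actually used), but it buys structural information the paper's manipulation hides: an explicit value of the sum in the irreducible case, and an identification of the exceptional value $a=1/9$ as the image under $\psi$ of the roots of $\beta^3-3\beta^2+1$, the precise locus where Frobenius acts as a three-cycle yet the sum still vanishes.
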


The special case $q=p$ of Theorem~\ref{thm:zero} is in~\cite[Theorem~2.1]{SunZH:sums_(3k_k)},
under the additional assumption $a\neq 1/27$, which appears superfluous with our proof.
Theorem~\ref{thm:zero} does not extend to the excluded case $a=1/9$.
In fact, according to Equation~\eqref{eq:3kk91}, which we will obtain by different means introduced
in Section~\ref{sec:Cardano},
when $q\equiv\pm 2\pmod{9}$ we have
$
\sum_{0<k<q/3}\binom{3k}{k}9^{-k}\equiv 0\pmod{p}
$.
However, according to Equation~\eqref{eq:3kk9},
when $q\equiv\pm 2\pmod{9}$ we also have
$
\sum_{0<k<q}\binom{3k}{k}9^{-k}\equiv -1\pmod{p}
$.
Consequently, the polynomial $(1-z)^3-9z^2$ has no roots in $\F_q$,
because if any such root $c$ existed then
according to Equations~\eqref{eq:ZWS_short} and~\eqref{eq:ZWS}
the sums on the shorter range would equal $1-c$ times the sum over the longer range.

\begin{proof}
Suppose first that all roots of cubic polynomial $a(1-z)^3-z^2$ belong to $\F_q$.
They are distinct because its discriminant $a(4-27a)$ is not zero.
Moreover, neither $1$ nor $-2$ is a root.
According to Equation~\eqref{eq:ZWS_short}, for each root $c\in\F_q$ of $a(1-z)^3-z^2$ we have
\[
\sum_{0<k<q/3}\binom{3k}{k}a^k
=
\frac{-3+3(1-4c)^{(q-1)/2}}{2(2+c)}
\in\left\{0,\frac{-3}{2+c}\right\},
\]
because $(1-4c)^{(q-1)/2}=\pm 1$.
Because the latter alternative can hold for at most one value of $c$,
we conclude that the former alternative holds, which is the desired conclusion.

In the opposite direction, suppose
$\sum_{0<k<q/3}\binom{3k}{k}a^k=0$,
and let $c$ satisfy
$a(1-c)^3-c^2=0$, with $c$ in the algebraic closure of $\F_q$.
Our goal is to how that $c^q=c$, which is equivalent to $c\in\F_q$.
The first congruence of Corollary~\ref{cor:3k_su_k} with $x=a$ yields
\[
2(2+c)
(1-c)^{q-1}
=
1+3(1-4c)^{(q-1)/2}
+2c^q,
\]
or, equivalently,
\[
(4+2c)
(1-c^q)
-(1+2c^q)(1-c)
=
3(1-4c)^{(q-1)/2}(1-c),
\]
which simplifies to
\[
1+c-2c^q
=
(1-4c)^{(q-1)/2}(1-c).
\]
Squaring both sides and then multiplying by $1-4c$ yields
\[
\bigl((1-c)-2(c^q-c)\bigr)^2
(1-4c)
=
(1-4c^q)(1-c)^2,
\]
which is equivalent to
\[
4(c^q-c)(1-c)^2
-4(c^q-c)(1-c)(1-4c)
+4(c^q-c)^2(1-4c)
=
0.
\]
Unless $c^q=c$, which is the desired conclusion,
we deduce
\[
(1-c)^2
-(1-c)(1-4c)
+(c^q-c)(1-4c)
=
0,
\]
whence
$1-c^q=(1-c)^2/(1-4c)$,
and
$c^q=-c(2+c)/(1-4c)$.
Because $c\neq 0,1$ we also find
$c^{q-1}=-(2+c)/(1-4c)$
and
$(1-c)^{q-1}=(1-c)/(1-4c)$.

At this point we use the information that $a\in\F_q^\ast$, which means $a^{q-1}=1$, and reads
$c^{2(q-1)}=(1-c)^{3(q-1)}$
in terms of $c$.
Substituting the expressions that we just found
for $c^{q-1}$ and $(1-c)^{q-1}$
we find
$(2+c)^2(1-4c)=(1-c)^3$.
Noting that $(2+c)^2(1-4c)=4(1-c)^3-27c^2$ we find $(1-c)^3=9c^2$,
in contrast with our hypothesis $a\neq 1/9$.
This contradiction concludes the proof.
\end{proof}

In the rest of this section we discuss some consequences of Theorem~\ref{thm:zero}.
If $a\in\F_q$ then $a(1-z)^3-z^2$, like any cubic polynomial in $\F_q[x]$,
has all its roots in $\F_{q^2}$ or $\F_{q^3}$, and hence splits into linear factors
over the extension field $\F_{q^6}$.
Therefore, as an example, when $a=1$ we find
\begin{equation}\label{eq:a=1}
\sum_{0<k<q/3}\binom{3k}{k}\equiv 0\pmod{p}
\end{equation}
for $p>3$ and $p\neq 23$, and $q$ a power of $p^6$.
This is the crucial case of~\cite[Theorem~1.4]{Sun:sums_higher_Catalan}, which was proved there in a more complicated way.
Of course the hypothesis that $q$ is a power of $p^6$ can be relaxed to
the polynomial $(1-z)^3-z^2$ splitting into linear factors over $\F_q$.

Similarly,
for $p>3$ and $p\neq 31$, and $q$ any power of $p^6$ we have
\begin{equation}\label{eq:a=-1}
\sum_{0<k<q/3}\binom{3k}{k}(-1)^k\equiv 0\pmod{p},
\end{equation}
Combining Equations~\eqref{eq:a=1} and~\eqref{eq:a=-1} we find
\begin{equation*}
\sum_{0<h<q/6}\binom{6h}{2h}\equiv
\sum_{0<h<q/6}\binom{6h-3}{2h-1}\equiv
0\pmod{p}
\end{equation*}
for $p>3$ and $p\not\in\{23,31\}$, and $q$ any power of $p^6$.

If
$a=4/(27+m^2)$ with $m\in\Q$, then
\[
\sum_{0<k<q/3}\binom{3k}{k}a^k\equiv 0\pmod{p},
\]
whenever $q$ is a power of $p^3$ and $a\in\Z_p$.
This is because the discriminant $a(1-4m)$ of the polynomial
$a(1-z)^3-z^2$ is then a perfect square (equal to $(am)^2$),
and hence all roots of the polynomial viewed modulo $p$ belong to $\F_{p^3}$.
The special case where $q=p$ is part of~\cite[Theorem~2.5]{SunZH:sums_(3k_k)}.

Theorem~\ref{thm:zero} can also be applied to algebraic integer values for $a$, such as $a=i$.
With $p>3$, imposing $i^2\not\equiv (4/27)^2\pmod{p}$ amounts to $5\cdot 149\not\equiv 0\pmod{p}$.
Consequently, if $p>3$ and $p\not\in\{5,149\}$, the congruence
\begin{equation*}
\sum_{0<k<q/3}\binom{3k}{k}i^k\equiv 0\pmod{p},
\end{equation*}
holds for any power $q$ of $p^6$ if $p\equiv 1\pmod{4}$,
and for $q$ a power of $p^{12}$ if $p\equiv -1\pmod{4}$.
Together with Equations~\eqref{eq:a=1} and~\eqref{eq:a=-1},
under the same assumptions but including $p\not\in\{23,31\}$ we conclude
\begin{equation*}
\sum_{0<h<q/12}\binom{12h}{4h}\equiv
0\pmod{p}.
\end{equation*}

In a similar fashion, one may take $a=\pm\omega$, where $\omega=(-1+\sqrt{-3})/2$.
For example, taking $a=\omega$, and combining with Equation~\eqref{eq:a=1},
if $p>3$ and $p\not\in\{23, 853\}$ one concludes that
\begin{equation*}
\sum_{0<h<q/9}\binom{9h}{3h}\equiv
0\pmod{p}
\end{equation*}
holds for $q$ a power of $p^6$ if $p\equiv 1\pmod{3}$,
and for $q$ a power of $p^{12}$ if $p\equiv -1\pmod{3}$.

\section{A different approach to the cubic equation}\label{sec:Cardano}

Now we take a different approach to the series $y=\sum_{k=0}^{\infty}\binom{3k}{k}x^k$.
According to Equation~\eqref{eq:y_equation} it satisfies $(4-27x)y^3-3y-1=0$.
In principle one may obtain a closed form for this generating function by solving this equation through Cardano's formula.
However, such a closed form would involve taking both a square root and a cube root,
and this is not well suited to further manipulations we intend to do
in order to deduce a congruence modulo a prime for a truncated version of the series.

The discriminant of $(4-27x)y^3-3y-1$, viewed as a polynomial in $y$, equals $3^6\cdot x(4-27x)$.
We would like to substitute a rational function for $x$ in such a way that the discriminant becomes the square of a rational function.
The most elegant substitution appears to be $x=4s^2/\bigl(27(s^2-1))\bigr)$, which amounts to $s^2=-27x/(4-27x)$,
for which the discriminant becomes $-3\cdot (12s)^2/(s^2-1)^2$.
Note that the discriminant is only a square up to the factor $-3$,
but some occurrence of a square root of $-3$ is bound to turn up somewhere
with any other choice of a substitution,
as solving the cubic equation by radicals requires the presence of a primitive cube root of unity $(-1\pm\sqrt{-3})/2$ in the ground field.
Adopting that substitution the series $y$ acquires the following simple closed form.

\begin{lemma}\label{lemma:3k_s}
In the power series ring $\Q[[s]]$ we have
\[
2\sum_{k=0}^\infty\binom{3k}{k}
\left(\frac{4s^2}{27(s^2-1))}\right)^k
=
(1+s)^{2/3}(1-s)^{1/3}+(1-s)^{2/3}(1+s)^{1/3}.
\]
\end{lemma}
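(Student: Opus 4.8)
The plan is to verify that the proposed closed form, call it $Y(s)$, satisfies the same cubic equation as $2y$ where $y=\sum_k\binom{3k}{k}x^k$ and $x=4s^2/\bigl(27(s^2-1)\bigr)$, and that it has the correct initial value, so that the two formal power series must coincide. Recall from Equation~\eqref{eq:y_equation} with $r=3$ that $y$ satisfies $(4-27x)y^3-3y-1=0$. Substituting $x=4s^2/\bigl(27(s^2-1)\bigr)$ gives $4-27x=4/(s^2-1)\cdot(s^2-1)-4s^2/(s^2-1)=-4/(s^2-1)$, so after clearing the denominator $y$ satisfies
\[
-\frac{4}{s^2-1}\,y^3-3y-1=0,
\qquad\text{equivalently}\qquad
4y^3+3(s^2-1)y+(s^2-1)=0.
\]
Writing $y=Y/2$, the series $Y=2y$ should therefore satisfy the depressed cubic
\begin{equation}\label{eq:Y_cubic}
Y^3+\tfrac{3}{2}(s^2-1)\,Y+2(s^2-1)=0.
\end{equation}

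First I would introduce the two cube roots appearing in $Y(s)$, setting $u=(1+s)^{2/3}(1-s)^{1/3}$ and $v=(1-s)^{2/3}(1+s)^{1/3}$, chosen as the principal branches in $\Q[[s]]$ normalized so that $u(0)=v(0)=1$. The key observation is that
\[
uv=(1+s)(1-s)=1-s^2
\qquad\text{and}\qquad
u^3+v^3=(1+s)^2(1-s)+(1-s)^2(1+s)=(1-s^2)\bigl((1+s)+(1-s)\bigr)=2(1-s^2).
\]
Since $Y=u+v$, I would then use the identity $Y^3=(u+v)^3=u^3+v^3+3uv(u+v)=u^3+v^3+3uvY$ to compute
\[
Y^3=2(1-s^2)+3(1-s^2)Y=-2(s^2-1)-3(s^2-1)Y.
\]
Rearranging gives exactly Equation~\eqref{eq:Y_cubic}, so $Y(s)$ is a root of the required cubic. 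Both $u$ and $v$ lie in $\Q[[s]]$ via the binomial expansion of $(1\pm s)^{1/3}$, so $Y\in\Q[[s]]$ as well.

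It remains to confirm that $Y(s)$ is the \emph{correct} root, i.e.\ the one equal to $2y$ rather than one of the other two solutions of the cubic. This I would settle by matching the constant term: from the series, $2y(0)=2\binom{0}{0}=2$, while $Y(0)=u(0)+v(0)=1+1=2$, so the constant terms agree. Because the cubic \eqref{eq:Y_cubic} has, over the field $\Q((s))$ of formal Laurent series, at most one root with a prescribed nonzero constant term (the three roots are formally distinct, their pairwise differences being nonzero power series, since the discriminant of the cubic is a nonzero element of $\Q((s))$ as computed from $3^6 x(4-27x)$ being nonzero), the agreement of constant terms forces $Y(s)=2y(s)$ as formal power series. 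The main obstacle, and the only point requiring care, is the branch choice for the fractional powers: one must fix the principal branches so that the product and sum identities for $uv$ and $u^3+v^3$ hold with the stated signs, and so that $u(0)=v(0)=1$; with the naive choices $(1\pm s)^{1/3}=1+O(s)$ and the factors grouped as written, these identities hold identically in $\Q[[s]]$, and no genuine difficulty arises.
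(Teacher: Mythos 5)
Your overall strategy is the same as the paper's: verify that the closed form is a root of the cubic satisfied by the series, then argue that the constant term pins down which root it is. The algebra with $u$, $v$, $uv=1-s^2$ and $u^3+v^3=2(1-s^2)$ is correct, modulo one slip: substituting $y=Y/2$ into $4y^3+3(s^2-1)y+(s^2-1)=0$ and clearing denominators gives $Y^3+3(s^2-1)Y+2(s^2-1)=0$, not $Y^3+\tfrac{3}{2}(s^2-1)Y+2(s^2-1)=0$ as you display; your computation of $(u+v)^3$ does produce the correct equation, so only the stated target is wrong, not the verification.

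The genuine gap is in the uniqueness step. You argue that since the discriminant is a nonzero element of $\Q((s))$, the three roots are distinct, and hence at most one root can have a prescribed constant term. Distinctness of roots does not control constant terms: two distinct power series can share a constant term, and in fact two of the three roots of this cubic do. The reduction of the cubic at $s=0$ is $Y^3-3Y-2=(Y-2)(Y+1)^2$ (equivalently $4y^3-3y-1=(y-1)(2y+1)^2$), so $-1$ is a double root of the reduction, and your reasoning, applied to the constant term $-1$, would give a false conclusion. Consistently with this, the discriminant $3^6x(4-27x)$ has $s$-adic valuation $2$ because $x=O(s^2)$: it is nonzero in $\Q((s))$ but is not a unit in $\Q[[s]]$, which is what an argument along your lines would actually require. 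What saves the proof is that the constant term you care about, namely $2$ for $Y$ (i.e.\ $1$ for $y$), is a \emph{simple} root of the reduced cubic; by the uniqueness part of Hensel's lemma, or by the elementary observation the paper makes from the factorization $(y-1)(2y+1)^2$, there is exactly one root in $\Q[[s]]$ with that constant term. With that one-line repair the proof is complete; your worry about branch choices is not an issue, since all fractional powers are defined by binomial series with constant term $1$ and the identities for $uv$ and $u^3+v^3$ then hold identically.
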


\begin{proof}
According to the case $r=3$ of Equation~\eqref{eq:y_equation},
which reads $(4-27x)y^3-3y-1=0$,
after applying the substitution
$x=4s^2/\bigl(27(s^2-1)\bigr)$
the formal series
\[
y_1(s):=\sum_{k=0}^\infty\binom{3k}{k}
\left(\frac{4s^2}{27(s^2-1))}\right)^k
\in\Q[[s]]
\]
is a root of the polynomial
\[
\frac{4}{1-s^2}\cdot y^3-3y-1\in\bigl(\Q[[s]]\bigr)[y].
\]
Because $4y^3-3y-1=(y-1)(2y+1)^2$, the series $y_1(s)$
is the only root of this polynomial having constant term $1$.
The series
\[
y_2(s):=\frac{1}{2}
(1-s^2)^{1/3}
\cdot
\bigl((1+s)^{1/3}+(1-s)^{1/3}\bigr)\in\Q[[s]]
\]
has constant term $1$ and is also root of the same polynomial, whence $y_1(s)=y_2(s)$ as claimed.
\end{proof}

Now we derive corresponding congruences for the finite sums.

\begin{theorem}\label{thm:pol_3k_s}
Set $x=4s^2/(27(s^2-1))$ in the polynomial ring $\Z[s]$.
Let $q$ be a power of the prime $p>3$,
and set $\varepsilon=\left(\frac{q}{3}\right)$, a Legendre symbol.
Thus, $\varepsilon=\pm1$ according to whether $q\equiv\pm1\pmod{3}$.
Then
\[
2(1-s^{2})^{(2q-3+\varepsilon)/6}
\sum_{0\le k<q/3}\binom{3k}{k}x^k
\equiv
(1+s)^{(2q+\varepsilon)/3}+(1-s)^{(2q+\varepsilon)/3}
\pmod{p},
\]
and
\begin{align*}
2&(1-s^2)^{(4q-3-\varepsilon)/6}
\sum_{0\le k<2q/3}\binom{3k}{k}
x^k
\\&\equiv
(1+s)^{(q-\varepsilon)/3}(1-s^q/3)+(1-s)^{(q-\varepsilon)/3}(1+s^q/3)
\pmod{p}.
\end{align*}
\end{theorem}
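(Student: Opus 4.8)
The plan is to deduce both congruences from the closed form of Lemma~\ref{lemma:3k_s} by the truncation-and-reduction procedure of Section~\ref{section:proof_3k+e_su_k}, the only genuinely new ingredient being the cube roots. Write $u=(1+s)^{1/3}$ and $v=(1-s)^{1/3}$ as power series in $\Z_{(p)}[[s]]$; their coefficients are $p$-integral since $1/3\in\Z_{(p)}$ for $p>3$. Then $u^3=1+s$, $v^3=1-s$, $(uv)^3=1-s^2$, and with $y=\sum_{k\ge0}\binom{3k}{k}x^k$ the lemma reads $2y=u^2v+uv^2$. The whole argument runs on the Frobenius congruences $u^q\equiv(1+s^q)^{1/3}$, $v^q\equiv(1-s^q)^{1/3}$, $(uv)^q\equiv(1-s^{2q})^{1/3}\pmod p$, which hold because reducing the integral coefficients modulo $p$ lands them in $\F_p$, where they are fixed by $a\mapsto a^q$.

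For the range $0\le k<q/3$ I would note that $x^k=(-4/27)^ks^{2k}(1-s^2)^{-k}$, so the largest admissible index $M_1=(2q-3+\varepsilon)/6$ is exactly the power of $1-s^2$ needed to clear denominators. Multiplying the lemma by $(1-s^2)^{M_1}=(uv)^{3M_1}$ gives the \emph{exact} identity $2(1-s^2)^{M_1}y=u^av^b+u^bv^a$ with $a=3M_1+2$, $b=3M_1+1$, so that $a+b=2q+\varepsilon$ and $a-b=1$. The claimed right-hand side is precisely $u^{2q+\varepsilon}+v^{2q+\varepsilon}=u^{a+b}+v^{a+b}$, and the elementary identity
\[
u^{a+b}+v^{a+b}-\bigl(u^av^b+u^bv^a\bigr)=(u^a-v^a)(u^b-v^b)
\]
isolates the gap between the claimed right-hand side and the full series. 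Reducing this product modulo $p$, the factor $u^q-v^q\equiv(1+s^q)^{1/3}-(1-s^q)^{1/3}$ is $O(s^q)$, while the remaining factor has vanishing constant term and so is $O(s)$; hence the gap is $O(s^{q+1})$. On the other hand, $2(1-s^2)^{M_1}\sum_{k\ge q/3}\binom{3k}{k}x^k$ is, modulo $p$, supported on the indices with $\binom{3k}{k}\not\equiv0$, and by Lemma~\ref{lemma:binomial_vanish} the first of these beyond the range is $k=(q+1)/2$, which contributes at order $s^{q+1}$. Thus the truncated left-hand side and the claimed right-hand side differ modulo $p$ by a power series that is $O(s^{q+1})$; being a polynomial of degree at most $2M_1<q$, that difference vanishes.

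The range $0\le k<2q/3$ runs identically with $M_2=(4q-3-\varepsilon)/6$ and $2(1-s^2)^{M_2}y=u^{a'}v^{b'}+u^{b'}v^{a'}$, $a'=3M_2+2$, $b'=3M_2+1$. The one new point is the factors $1\mp s^q/3$ in the target: since $(1\mp s^q)^{1/3}\equiv1\mp s^q/3\pmod{s^{2q}}$, the claimed right-hand side is congruent modulo $(s^{2q},p)$ to $u^{q-\varepsilon}v^q+u^qv^{q-\varepsilon}$. Subtracting this from the full series and factoring out the common power of $uv$ leaves a multiple of $1-(uv)^q\equiv1-(1-s^{2q})^{1/3}$, which is $O(s^{2q})$; meanwhile the truncation tail now first survives at $k=q$, again of order $s^{2q}$. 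Both discrepancies being $O(s^{2q})$ modulo $p$, and both sides being polynomials of degree less than $2q$, the congruence follows.

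The delicate part is the bookkeeping that forces the two independent error terms to the \emph{same} order in $s$: one must verify that the ``full series minus target'' gap and the truncation tail both vanish to order $s^{q+1}$ (respectively $s^{2q}$), since it is exactly the match of these orders with the degrees of the two polynomials that closes the argument. Putting the cube-root Frobenius congruences and the $p$-integrality of $u,v$ on a rigorous footing, and recognizing $1\mp s^q/3$ as a truncated cube root in the second range, are the steps that require the most care.
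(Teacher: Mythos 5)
Your argument is correct and takes essentially the same route as the paper's own proof: both start from the closed form of Lemma~\ref{lemma:3k_s}, clear denominators with $(1-s^2)^{(2q-3+\varepsilon)/6}$ resp.\ $(1-s^2)^{(4q-3-\varepsilon)/6}$, invoke the congruences $(1\pm s)^{q/3}\equiv 1\pmod{(s^q,p)}$ and $(1\pm s)^{q/3}\equiv 1\pm s^q/3\pmod{(s^{2q},p)}$ together with the vanishing of $\binom{3k}{k}$ on the gap ranges supplied by Lemma~\ref{lemma:binomial_vanish}, and finish with a degree count. Your explicit cube roots $u,v$, the factorization $(u^a-v^a)(u^b-v^b)$ of the error, and the tracking to order $s^{q+1}$ rather than $s^q$ are harmless reorganizations of the same computation.
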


From the two congruences of Theorem~\ref{thm:pol_3k_s} one obtains the polynomial congruence
\begin{align*}
3&(1-s^2)^{(4q-3-\varepsilon)/6}
s^{-q}
\sum_{q/2\le k<2q/3}\binom{3k}{k}
x^k
\\&\equiv
(1+s)^{(q-\varepsilon)/3}-(1-s)^{(q-\varepsilon)/3}
\pmod{p}.
\end{align*}

\begin{proof}
Starting from the identity of power series in Lemma~\ref{lemma:3k_s} we produce polynomial congruences in the usual way.
We start with the shorter range, noting that $\sigma=(2q-3+\varepsilon)/6$ is the largest integer which is less than $q/3$.

On the one hand we have
\begin{align*}
2&(1-s^{2})^{\sigma}
\sum_{k=0}^\infty\binom{3k}{k}
x^k
\\&\equiv
2(1-s^{2})^{\sigma}
\sum_{0\le k<q/3}\binom{3k}{k}
\left(\frac{4s^2}{27(s^2-1))}\right)^k
\pmod{(s^q,p)}
\\&=
2\sum_{0\le k<q/3}\binom{3k}{k}
(-4s^2/27)^k(1-s^2)^{\sigma-k}.
\end{align*}
This final expression is a polynomial in $s$, of degree at most $2\sigma$, which is less than $q$.
On the other hand,
because $(1\pm s)^{q/3}\equiv 1\pmod{(s^q,p)}$,\
for $q\equiv 1\pmod{3}$ we have
\begin{align*}
2&(1-s^{2})^{(q-1)/3}
\sum_{k=0}^\infty\binom{3k}{k}
x^k
\\&=
(1+s)^{(q+1)/3}(1-s)^{q/3}+(1-s)^{(q+1)/3}(1+s)^{q/3}
\\&\equiv
(1+s)^{(2q+1)/3}+(1-s)^{(2q+1)/3}
\pmod{(s^q,p)}
\end{align*}
and for $q\equiv -1\pmod{3}$ we have
\begin{align*}
2&(1-s^{2})^{(q-2)/3}
\sum_{k=0}^\infty\binom{3k}{k}
x^k
\\&=
(1+s)^{q/3}(1-s)^{(q-1)/3}+(1-s)^{q/3}(1+s)^{(q-1)/3}
\\&\equiv
(1-s)^{(2q-1)/3}+(1+s)^{(2q-1)/3}
\pmod{(s^q,p)}.
\end{align*}

Now we prove the congruence over the longer range $0\le k<2q/3$.
Note that $q-1-\sigma=(4q-3-\varepsilon)/6$ is the largest integer which is less than $2q/3$.

On the one hand we have
\begin{align*}
2&(1-s^{2})^{q-1-\sigma}
\sum_{k=0}^\infty\binom{3k}{k}
x^k
\\&\equiv
2(1-s^{2})^{q-1-\sigma}
\sum_{0\le k<2q/3}\binom{3k}{k}
\left(\frac{4s^2}{27(s^2-1))}\right)^k
\pmod{(s^{2q},p)}
\\&=
2\sum_{0\le k<2q/3}\binom{3k}{k}
(-4s^2/27)^k(1-s^2)^{{q-1-\sigma}-k}.
\end{align*}
This last expression is a polynomial in $s$, of degree not exceeding $2q-2-2\sigma$, which is less than $2q$.
On the other hand,
because $(1\pm s)^{q/3}\equiv 1\pm s^q/3\pmod{(s^{2q},p)}$,\
for $q\equiv 1\pmod{3}$ we have
\begin{align*}
2&(1-s^{2})^{(2q-2)/3}
\sum_{k=0}^\infty\binom{3k}{k}
x^k
\\&=
(1-s^2)^{q/3}(1+s)^{q/3}(1-s)^{(q-1)/3}
+(1-s^2)^{q/3}(1-s)^{q/3}(1+s)^{(q-1)/3}
\\&\equiv
(1-s)^{(q-1)/3}(1+s^q/3)
+(1+s)^{(q-1)/3}(1-s^q/3)
\pmod{(s^{2q},p)}.
\end{align*}
and for $q\equiv -1\pmod{3}$ we have
\begin{align*}
2&(1-s^{2})^{(2q-1)/3}
\sum_{k=0}^\infty\binom{3k}{k}
x^k
\\&=
(1-s^2)^{q/3}(1+s)^{(q+1)/3}(1-s)^{q/3}
+(1-s^2)^{q/3}(1-s)^{(q+1)/3}(1+s)^{q/3}
\\&\equiv
(1+s)^{(q+1)/3}(1-s^q/3)+(1-s)^{(q+1)/3}(1+s^q/3)
\pmod{(s^{2q},p)}.
\end{align*}
This concludes our proof.
\end{proof}

\section{Some numerical applications of Theorem~\ref{thm:pol_3k_s}}\label{sec:numerical}

In this final section we give several numerical applications of Theorem~\ref{thm:pol_3k_s}
by assigning some interesting values to $s$.
Recall that $x=4s^2/(27(s^2-1))$.
To simplify notation, all unadorned congruences in this section are meant modulo $p$, with $p>3$.

For $s=3$ the two congruences of Theorem~\ref{thm:pol_3k_s} give
\begin{equation*}
\label{eq:3kk61}
\sum_{0\leq k<q/3}\binom{3k}{k}\frac{1}{6^k}
\equiv
\begin{cases}
2^{(2q-1)/3}-2^{(q+1)/3}&\text{if $q\equiv -1\pmod{3}$,}\\
2^{(q+2)/3}-2^{(2q-2)/3}&\text{if $q\equiv 1\pmod{3}$,}
\end{cases}
\end{equation*}
and
\begin{equation*}\label{eq:3kk62}
\sum_{k=0}^{q-1}\binom{3k}{k}\frac{1}{6^k}
\equiv
\begin{cases}
-2^{(q-2)/3}&\text{if $q\equiv -1\pmod{3}$,}\\
2^{(q-1)/3}&\text{if $q\equiv 1\pmod{3}$,}
\end{cases}
\end{equation*}
the second of which is one of the assertions of~\cite[Theorem~1.2]{Sun:sums_higher_Catalan}.

For $s=i\sqrt{3}=1+2\omega=-1-2\omega^{-1}$, where $\omega=\exp(2\pi i/3)$, we have $s^2=-3$
and $(1\pm s)^3=-8=(-2)^3$.
Write $q\equiv b\pmod{9}$, with $b\in\{\pm 1,\pm 2,\pm 4\}$
(as we are assuming $p>3$).
When $q\equiv -1\pmod{3}$, that is, $b\in\{-1,2,-4\}$, we have
\begin{align*}
\sum_{0\leq k<q/3}\binom{3k}{k}\frac{1}{9^k}
&\equiv
2^{-(2q-1)/3}
\cdot\bigl(-2\omega^{-1})^{(2q-1)/3}+(-2\omega)^{(2q-1)/3}\bigr)
\\&\equiv
-\omega^{-(2b-1)/3}-\omega^{(2b-1)/3}
\pmod{p},
\end{align*}
which is congruent to $1$, $1$ or $-2$ according as $b=-1$, $b=2$ or $b=-4$.
Together with a similar calculation for the case $q\equiv 1\pmod{3}$, we obtain
\begin{equation}\label{eq:3kk91}
\sum_{0\leq k<q/3}\binom{3k}{k}
\frac{1}{9^k}
\equiv
\begin{cases}
1&\text{if $q\equiv\pm 1\pmod{9}$,}\\
1&\text{if $q\equiv\pm 2\pmod{9}$,}\\
-2&\text{if $q\equiv\pm 4\pmod{9}$.}
\end{cases}
\end{equation}
Similarly, we find
\begin{equation}\label{eq:3kk9}
\sum_{k=0}^{q-1}\binom{3k}{k}\frac{1}{9^k}
\equiv
\begin{cases}
1&\text{if $q\equiv\pm 1\pmod{9}$,}\\
0&\text{if $q\equiv\pm 2\pmod{9}$,}\\
-1&\text{if $q\equiv\pm 4\pmod{9}$,}
\end{cases}
\end{equation}
as in~\cite[Theorem~1.5]{Sun:sums_higher_Catalan}.
Note that according to Lemma~\ref{lemma:3k_s} we have
$\sum_{k=0}^{\infty}\binom{3k}{k}9^{-k}
=\exp(i\pi/9)+\exp(-i\pi/9)
=2\cos(\pi/9)$.

For $s=1/\sqrt{5}$ we have $(1\pm s)=\pm2\phi_{\pm}/\sqrt{5}$
with $\phi_{\pm} = (1 \pm \sqrt{5} )/2$.
Letting $\varepsilon=\leg{q}{3}$ as in Theorem~\ref{thm:pol_3k_s}, we find
\[
\sum_{0\leq k<q/3}\binom{3k}{k}
\left(-\frac{1}{27}\right)^k
\equiv
\frac{(\phi_+)^{(2q+\varepsilon)/3}-(\phi_-)^{(2q+\varepsilon)/3}}{\sqrt{5}}
=
F_{(2q+\varepsilon)/3}.
\]
Note that
$F_{(2q+\varepsilon)/3}
\equiv \leg{q}{5}F_{(q-\varepsilon)/3-\leg{q}{5}}
\pmod{p}
$
because $2\phi_{\pm}^p=1\pm\leg{p}{5}\sqrt{5}$, see \cite[p.144]{MatTau:polylog}, for example.
Taking this into account we recover the congruence in~\cite[Corollary 3.1]{SunZH:lucas}.
In a similar way we obtain
\[
\sum_{q/2<k<2q/3}\binom{3k}{k}
\left(-\frac{1}{27}\right)^k
\equiv
\frac{(\phi_+)^{(q-\varepsilon)/3}-(\phi_-)^{(q-\varepsilon)/3}}{3\sqrt{5}}
=
\frac{F_{(q-\varepsilon)/3}}{3}.
\]
In this case the corresponding power series converges, and according to Lemma~\ref{lemma:3k_s}
\[
\sum_{k=0}^{\infty}\binom{3k}{k}\left(-\frac{1}{27}\right)^k
=\frac{(\phi_+)^{1/3}+(\phi_-)^{1/3}}{\sqrt{5}}
=\frac{2\cosh(\ln(\phi_+)/3)}{\sqrt{5}}.
\]

By setting
$s=2/\sqrt{5},3/\sqrt{5},i/\sqrt{3},i$
in Theorem~\ref{thm:pol_3k_s}
one obtains similar congruences for
$x=-16/27,1/3,1/27, 2/27$, respectively.

\bibliography{References}

\def\cprime{$'$} \def\polhk#1{\setbox0=\hbox{#1}{\ooalign{\hidewidth
  \lower1.5ex\hbox{`}\hidewidth\crcr\unhbox0}}}
\providecommand{\bysame}{\leavevmode\hbox to3em{\hrulefill}\thinspace}
\providecommand{\MR}{\relax\ifhmode\unskip\space\fi MR }
\providecommand{\MRhref}[2]{%
  \href{http://www.ams.org/mathscinet-getitem?mr=#1}{#2}
}
\providecommand{\href}[2]{#2}
\begin{thebibliography}{GKP94}

\bibitem[GKP94]{GKP}
Ronald~E. Graham, Donald~E. Knuth, and Oren Patashnik, \emph{Concrete
  mathematics}, second ed., Addison-Wesley, New York, 1994.

\bibitem[MT13]{MatTau:polylog}
Sandro Mattarei and Roberto Tauraso, \emph{Congruences for central binomial
  sums and finite polylogarithms}, J. Number Theory \textbf{133} (2013), no.~1,
  131--157. \MR{2981405}

\bibitem[MT18]{MatTau:truncation}
\bysame, \emph{From generating series to polynomial congruences}, J. Number
  Theory \textbf{182} (2018), 179--205. \MR{3703936}

\bibitem[Sta99]{Stanley:EC2}
Richard~P. Stanley, \emph{Enumerative combinatorics. {V}ol. 2}, Cambridge
  Studies in Advanced Mathematics, vol.~62, Cambridge University Press,
  Cambridge, 1999, With a foreword by Gian-Carlo Rota and appendix 1 by Sergey
  Fomin. \MR{1676282 (2000k:05026)}

\bibitem[Sun]{Sun:sums_higher_Catalan}
Zhi-Wei Sun, \emph{Various congruences involving binomial coefficients and
  higher-order {C}atalan numbers}, preprint, {\sf arXiv:0909.3808v2}.

\bibitem[Sun14]{SunZH:lucas}
Zhi-Hong Sun, \emph{Congruences concerning {Lucas} sequences}, Int. J. Number
  Theory \textbf{10} (2014), no.~3, 793--815.

\bibitem[Sun16]{SunZH:sums_(3k_k)}
\bysame, \emph{Cubic congruences and sums involving {{\(\binom{3k}{k}\)}}},
  Int. J. Number Theory \textbf{12} (2016), no.~1, 143--164.

\bibitem[Wil06]{Wilf}
Herbert~S. Wilf, \emph{Generatingfunctionology}, third ed., A K Peters, Ltd.,
  Wellesley, MA, 2006. \MR{2172781}

\end{thebibliography}

\end{document}